\documentclass[11pt]{amsart}
\usepackage[a4paper,margin=1in]{geometry}
\usepackage{amsmath,amssymb,amsthm,mathtools}
\usepackage[colorlinks=true,citecolor=blue,linkcolor=blue,urlcolor=blue]{hyperref}

\allowdisplaybreaks
\numberwithin{equation}{section}

\newtheorem{theorem}{Theorem}[section]
\newtheorem{proposition}[theorem]{Proposition}
\newtheorem{lemma}[theorem]{Lemma}

\theoremstyle{remark}
\newtheorem{remark}[theorem]{Remark}

\newcommand{\C}{\mathbb C}
\newcommand{\R}{\mathbb R}
\newcommand{\Sph}{\mathbb S}
\newcommand{\Ric}{\operatorname{Ric}}
\newcommand{\diver}{\operatorname{div}}
\newcommand{\tr}{\operatorname{tr}}

\title[Lagrangian balls with conformal Maslov form]
{Rigidity of Lagrangian submanifolds with conformal Maslov form and Legendrian capillary boundary}

\author{Dong Gao}
\address{School of Science, Beijing University of Civil Engineering and Architecture,
Beijing 102616, P.R. China}
\email{gaodong@bucea.edu.cn}

\author{Yong Luo}
\address{Mathematical Science Research Center of Mathematics, Chongqing University of
Technology, Chongqing 400054, P.R. China}
\email{yongluo-math@cqut.edu.cn}

\author{Hui Ma}
\address{Department of Mathematical Sciences, Tsinghua University, Beijing 100084,
P.R. China}
\email{ma-h@tsinghua.edu.cn}

\author{Jiabin Yin}
\address{School of Mathematics and Statistics, Xinyang Normal University,
Xinyang 464000, P.R. China}
\email{jiabinyin@126.com}

\subjclass[2020]{53D12, 53C24, 53C42}
\keywords{Lagrangian submanifolds, conformal Maslov form, Legendrian capillary boundary, Whitney sphere}

\begin{document}

\begin{abstract}
We classify smoothly immersed Lagrangian $n$-balls, $n\geq 2$, in the unit ball of $\mathbb{C}^n$ with conformal Maslov form and Legendrian capillary boundary. The image of every such immersion is either  an equatorial
Lagrangian disk or is contained in a Whitney sphere centered at the origin.
In dimension two, this confirms a conjecture of Li, Wang and Weng [Sci. China Math. 2021].
\end{abstract}

\maketitle

\section{Introduction}

Free boundary problems ask how an interior geometric equation
interacts with the position of the boundary. For a minimal disk in
the Euclidean three-ball, Nitsche proved that the free boundary
condition forces the disk to be equatorial \cite{Nitsche1985}.
Fraser and Schoen extended disk rigidity to balls in space forms
of arbitrary ambient dimension \cite{FraserSchoen2015}. A Lagrangian
submanifold in a complex ball has an additional constraint: its
boundary may be required to be Legendrian in the contact sphere.
This leads to a different boundary condition from the usual
orthogonality condition for a free boundary submanifold.

Let $\iota:M^n\to\C^n$ be a Lagrangian immersion. We take its mean
curvature vector $H$ to be the trace of the second fundamental form.
The one-form dual to $JH$ is the Maslov form; it is closed in
$\C^n$. We say that the Maslov form is conformal when
\begin{equation}\label{eq:intro-conformal}
\nabla_X(JH)=\frac{1}{n}\diver(JH)X.
\end{equation}
The closedness of the Maslov form explains why conformality takes
the stronger form \eqref{eq:intro-conformal}. In dimension two,
Castro and Urbano \cite{CastroUrbano1993} associated a holomorphic
cubic differential with this condition. They classified compact
orientable examples, including the Whitney sphere and a family of
Lagrangian tori. In arbitrary dimension, Ros and Urbano
\cite{RosUrbano1998} studied the closed conformal field $JH$.
They characterized the Whitney sphere by its second fundamental
form and proved rigidity for compact Lagrangian submanifolds
with vanishing first Betti number.

Several later results clarify the role of curvature and the
trace-free second fundamental form. Castro, Montealegre and
Urbano \cite{CastroMontealegreUrbano2001} studied closed conformal
fields on Lagrangian submanifolds of complex space forms.
Chao and Dong \cite{ChaoDong2012} proved a curvature pinching
result in this setting. Cao and Zhao \cite{CaoZhao2021} obtained
gap theorems, and Zhao and Cao \cite{ZhaoCao2022} gave further
Whitney sphere characterizations. Zhang \cite{Zhang2021}, Luo
and Yin \cite{LuoYin2022}, and Luo and Zhang \cite{LuoZhang2023}
established related energy gap results. These theorems concern
closed submanifolds or impose additional curvature or energy
conditions. They do not directly address a Lagrangian ball with
capillary boundary.

Li, Wang and Weng \cite{LiWangWeng2021} introduced the Legendrian
capillary boundary condition in the unit ball. Along such a
boundary, $N=\iota$ is the outward normal to the unit sphere and
$\nu$ is the outward conormal in $M$. The boundary is Legendrian,
and the two normals satisfy
\begin{equation}\label{eq:intro-angle}
\nu=\sin\theta\,N+\cos\theta\,JN
\end{equation}
for a constant $\theta\in[0,\pi]$ on each boundary component
\cite{LiWangWeng2021}. The case $\theta=\pi/2$ is the Legendrian
free boundary condition. %The capillary condition permits other
%angles and admits nonminimal examples.

Li, Wang and Weng proved that a minimal Lagrangian disk with
Legendrian capillary boundary is an equatorial disk
\cite[Theorem~1.1]{LiWangWeng2021}. Their Conjecture 
\cite[Conjecture~2.15]{LiWangWeng2021} asks
whether a disk with conformal Maslov form must be planar or a
Whitney cap. Luo and Sun \cite{LuoSun2021} established equatorial rigidity
for minimal Lagrangian surfaces with Legendrian free boundary and classified the minimal Lagrangian
capillary annuli in $\C^2$. Gaia \cite{Gaia2025} proved a related
free boundary result for Hamiltonian stationary Lagrangian disks
with Legendrian boundary, assuming under suitable regularity, continuity of the Lagrangian angle, and a localization property. More recently, Gao,
Ma and Yao \cite{GaoMaYao2026} proved equatorial rigidity for
minimal and, more generally, self-similar Lagrangian immersions
with connected Legendrian capillary boundary. Gao, Luo, Ma and
Yin \cite{GaoLuoMaYinSelfSimilar2026} classified the self-similar
case according to the number of boundary components. The present
problem allows a nonminimal mean curvature vector that satisfies
\eqref{eq:intro-conformal} without satisfying a self-similar
equation.

For $r>0$ and $c\in\C^n$, write the Whitney immersion as
\begin{equation}\label{eq:whitney}
W_{r,c}(x_1,\ldots,x_{n+1})
=c+\frac{r(1+ix_{n+1})}{1+x_{n+1}^2}
(x_1,\ldots,x_n),\qquad (x_1,\ldots,x_{n+1})\in\Sph^n.
\end{equation}
For $c=0$ and $r>1$, the preimage
$W_{r,0}^{-1}(\overline{\mathbb B}^{2n})\subset \Sph^n$
has two connected components, each diffeomorphic to the closed $n$-ball.
The restriction of $W_{r,0}$ to either component is a Lagrangian immersion
with Legendrian capillary boundary.
These are the higher-dimensional analogues of the examples in
\cite[Proposition~2.10]{LiWangWeng2021}. Their boundary latitudes are
$x_{n+1}=\pm\sqrt{(r^2-1)/(r^2+1)}$. The case $r=1$ also gives an example under our angle convention: the restrictions of $W_{1,0}$ to
$\{x_{n+1}\le0\}$ and $\{x_{n+1}\ge0\}$ have Legendrian capillary
boundary with $\theta=0$ and $\theta=\pi$, respectively.

We classify Lagrangian balls with these boundary conditions. The
result proves the disk conjecture of Li, Wang and Weng and gives
the corresponding statement in every dimension $n\geq2$.

\begin{theorem}\label{thm:main}
Let $\iota:M^n\to\overline{\mathbb B}^{2n}$ be a smooth Lagrangian
immersion with conformal Maslov form, where $n\geq2$ and $M^n$ is diffeomorphic to the closed
$n$-ball. Suppose that $\iota(M^\circ)\subset\mathbb B^{2n}$,
$\iota(\partial M)\subset\mathbb S^{2n-1}$, and the boundary is Legendrian capillary. Then $\iota$ is
an equatorial Lagrangian disk, or its image is contained in a Whitney
sphere centered at the origin.
\end{theorem}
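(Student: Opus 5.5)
The plan rests on the structure theory of Ros and Urbano \cite{RosUrbano1998} for Lagrangian submanifolds with conformal Maslov form, combined with a boundary argument that pins the center of the Whitney sphere at the origin.

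\textbf{Step 1: local structure.} Since $JH$ is a closed conformal vector field on $M$, write $JH=\nabla f$ locally. The function $\phi=\frac1n\diver(JH)$ then satisfies $\nabla^2 f=\phi\,g$. Ros and Urbano show that on the open set where $H\neq0$, the second fundamental form has the Whitney form
\[
\sigma(X,Y)=\frac{n}{n+2}\bigl(\langle X,Y\rangle H+\langle JX,H\rangle JY+\langle JY,H\rangle JX\bigr)
\]
whenever $\sigma$ is ``umbilical-like''. In general, though, conformality of the Maslov form alone does not force this.

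So I would first work with the Codazzi equation and the identity $\nabla^2 f=\phi g$. On a ball $M$, $f$ is globally defined, and $\nabla f$ is a closed conformal field whose zeros are isolated. I would also use the position vector. The key auxiliary functions are $u=\langle \iota, JH\rangle$-type quantities and the support function $\langle\iota,J\nabla f\rangle$. For these I would derive an elliptic equation (of Simons or Minkowski-formula type) whose integral over $M$ involves the trace-free part
\[
T=\sigma-\frac{n}{n+2}\bigl(\text{Whitney part}\bigr).
\]

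\textbf{Step 2: integral identity with boundary terms.} Compute a divergence identity of the form
\[
\diver Z=c\,|T|^2\rho
\]
with $\rho\ge0$, for a suitable tangent field $Z$ built from $\iota^\top$, $J\iota^\top$ and $JH$, in the spirit of Li--Wang--Weng's proof for the minimal case. On $\partial M$, the capillary condition $\nu=\sin\theta\,N+\cos\theta\,JN$ with $N=\iota$ and Legendrian boundary gives $\langle \iota,\nu\rangle=\sin\theta$ and $\langle J\iota,\nu\rangle=-\cos\theta$, and lets me express $\langle Z,\nu\rangle$ explicitly. I would show the boundary integral vanishes, or has a sign, using that $\partial M$ is a closed Legendrian $(n-1)$-manifold in $\Sph^{2n-1}$. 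Differentiating the capillary relation along $\partial M$ gives $\sigma(\nu,\cdot)$ on boundary tangents in terms of $\theta$. In particular, $\langle H,J\nu\rangle$-type terms are controlled, and $\partial M$ is umbilic in a suitable sense. This should force $T\equiv0$.

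\textbf{Step 3: classification.} With $T\equiv0$, Ros--Urbano's characterization \cite{RosUrbano1998} gives two possibilities: either $H\equiv0$ and $\sigma\equiv0$, so the image is a Lagrangian plane, or the image lies in some Whitney sphere $W_{r,c}$. In the planar case, the Legendrian boundary condition forces the plane through the origin, because $\langle J\iota,\iota\rangle=0$ on a Lagrangian plane requires $c\perp$ in the appropriate sense; this yields an equatorial disk. In the Whitney case, I must show $c=0$. Here the intersection $W_{r,c}\cap\Sph^{2n-1}$ must be Legendrian along the boundary, meaning $J\iota$ is normal, i.e. $\langle J\iota, d\iota\rangle=0$ on $\partial M$. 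For the Whitney sphere, $\iota-c$ has a known Liouville form, which I would compute and compare to obtain $\langle Jc,d\iota\rangle=0$ along a closed $(n-1)$-dimensional boundary curve set. Since the boundary spans enough directions, this gives $c=0$.

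\textbf{Main obstacle.} The crux is Step 2: finding the right vector field $Z$ so that the boundary terms cancel exactly under the capillary angle condition for arbitrary constant $\theta$. I would first try
\[
Z=\langle \iota,JH\rangle\,J\iota^{\top}-|\iota|^2 JH+\dots,
\]
adjusted by multiples of $\nabla(|\iota|^2)$. The alternative is to first prove that $\partial M$ lies where $JH\parallel\nu$ and then apply a unique-continuation comparison with the Whitney model.
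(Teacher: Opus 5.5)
\textbf{There is a genuine gap, and it sits exactly where you flagged the crux.} Step~2 never produces the vector field $Z$. Nothing in the proposal shows that an identity $\diver Z=c\,|T|^2\rho$ exists whose boundary term is controlled by the capillary condition. Steps~1 and~3 only organize the argument around the conclusion $T\equiv0$, so as written the theorem is not proved.

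This is not a minor omission. Li, Wang and Weng already observed that the natural integral (holomorphic-differential) argument leaves an uncontrolled boundary term once $H\neq0$. The boundary data you list ($\langle\iota,\nu\rangle=\sin\theta$, $\nabla_U\nu=\sin\theta\,U$, $A(U,Z,\nu)=\cos\theta\,g(U,Z)$, $A(U,\nu,\nu)=0$) say nothing a priori about the tangential part $W$ of $JH$ along $\partial M$. Any boundary term built from $JH$ will involve exactly that part. Incidentally, $\langle J\iota,\nu\rangle=+\cos\theta$, not $-\cos\theta$.

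In the minimal case $T\equiv0$ means $h\equiv0$. So your Step~2 would also have to reprove the minimal rigidity theorem as a special case; the paper simply invokes \cite[Corollary~3.8]{GaoMaYao2026} there.

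\textbf{The missing idea is the one you mention only in your last sentence, without a method: first prove $W=0$, i.e.\ $JH=a\nu$ on $\partial M$.} The paper starts from the fact that $W$ is a closed conformal field on $\Sigma=\partial M$, with $\nabla^\Sigma_UW=(\rho-a\sin\theta)U$ and $da=\sin\theta\,W^\flat$.
\begin{itemize}
\item For $n\ge3$, it first shows that the trace-free boundary cubic tensor $\mathring C$ vanishes. This uses the harmonic one-form $\mathring C(W,W,\cdot)$ on $\Sigma\cong\Sph^{n-1}$, the round leaves of $W^\perp$, and the vanishing of Codazzi tensors on round spheres. It then compares $\Ric^\Sigma(W,W)$ computed from the conformal field with the value from the Gauss equation of $\Sigma\subset\Sph^{2n-1}$. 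This gives $\cos\theta\,[a+(n+1)\cos\theta]+\frac{2}{(n+1)^2}|W|^2=0$, which together with $da=\sin\theta\,W^\flat$ forces $W=0$.
\item For $n=2$, it uses the Castro--Urbano normal form carried up to the boundary and a boundary ODE argument.
\end{itemize}

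Once $W=0$ is known, no vector field $Z$ is needed. The Ros--Urbano one-form $\eta_M=\mathring A(JH,JH,\cdot)$ is closed and coclosed, and its tangential boundary values are $a^2A(\nu,\nu,U)=0$. Its potential is therefore harmonic and constant on $\partial M$, hence $\eta_M=0$. The compact level sets of $f$ are round spheres, and the Codazzi-tensor lemma then gives $\mathring A=0$.

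The same fact $W=0$ is what places $\iota(\partial M)$ on a latitude $t=t_0$ of the Whitney sphere. That is what your center argument in Step~3 actually needs; ``the boundary spans enough directions'' is not an argument without it. Conversely, if you did reach $\mathring A=0$ by some other route, $W=0$ would follow at once from $0=A(U,\nu,\nu)=\mathring A(U,\nu,\nu)-\tfrac1{n+2}\langle W,U\rangle$. So Step~3 is repairable, and the real gap is Step~2.
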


The difficulty is already visible for surfaces. Li, Wang and Weng
observed that the holomorphic differential used in the minimal
disk proof retains an uncontrolled boundary term when $H$ is
nonzero \cite[Section~2.6]{LiWangWeng2021}. In higher dimensions,
the surface argument has no scalar holomorphic counterpart.
We first show that $JH$ is normal to the boundary. For surfaces,
we use a local boundary ODE. In higher dimensions, we compare two
Ricci curvature identities on the boundary. A harmonic one-form
then gives the Whitney second fundamental form in the interior.

Section~2 collects the geometric and boundary identities. Section~3
treats surfaces, and Section~4 treats higher dimensions. Section~5
proves the classification and identifies the center of the Whitney
sphere.

\section{Preliminaries}

We collect the tensor identities, closed conformal field properties,
and boundary formulas used in the proof. Our curvature convention is
\begin{equation}\label{eq:curvature-convention}
 R(X,Y)Z=\nabla_X\nabla_YZ-\nabla_Y\nabla_XZ
          -\nabla_{[X,Y]}Z.
\end{equation}
If $S$ is a symmetric covariant tensor, its covariant derivative is
called symmetric when it is symmetric in all its indices.

\subsection{The Lagrangian cubic tensor}

For a Lagrangian immersion, $J$ maps $TM$ isometrically to its normal
bundle. Let $D$ be the Euclidean connection and $\nabla$ the induced
connection on $M$. Write
\begin{equation}\label{eq:gauss-formula}
        D_XY=\nabla_XY+h(X,Y),
\end{equation}
where $h$ is the second fundamental form. We use the trace convention
\begin{equation}\label{eq:trace-H}
        H=\sum_{i=1}^n h(e_i,e_i)
\end{equation}
for the mean curvature vector. The Lagrangian cubic tensor
\begin{equation}\label{eq:cubic-A}
        A(X,Y,Z)=\langle h(X,Y),JZ\rangle
\end{equation}
is symmetric. The Euclidean Codazzi equation and
$\nabla_X^\perp(JY)=J\nabla_XY$ show that $\nabla A$ is symmetric as
a four-tensor.
Moreover,
\begin{equation}\label{eq:trace-A}
 \sum_{i=1}^n A(e_i,e_i,X)
 =\langle H,JX\rangle
 =-\langle JH,X\rangle.
\end{equation}

For a one-form $\eta$, set
\begin{equation}\label{eq:symmetric-product}
 \begin{split}
 (\eta\mathbin{\odot}g)(X,Y,Z)
  ={}&\eta(X)g(Y,Z)+\eta(Y)g(X,Z)\\
    &+\eta(Z)g(X,Y).
 \end{split}
\end{equation}
Put $V=JH$, and define $\alpha_H:=V^\flat=(JH)^\flat$ to be the Maslov form in our convention. Thus $\alpha_H(X)=\langle JH,X\rangle$. The tensor
\begin{equation}\label{eq:tracefree-A}
        \mathring A=A+\frac{1}{n+2}(\alpha_H\mathbin{\odot}g)
\end{equation}
is trace-free.  We have
\begin{lemma}\label{Symmetric}
If the Maslov form is conformal, so that
\begin{equation}\label{eq:closed-conformal}
        \nabla_XV=\rho X,
        \qquad \rho=\frac1n\diver V,
\end{equation}
then $\nabla\mathring A$ is symmetric.
\end{lemma}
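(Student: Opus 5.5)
Since $\mathring A$ is a fixed linear combination of $A$ and $\alpha_H\mathbin{\odot}g$, we have
\[
\nabla\mathring A=\nabla A+\frac{1}{n+2}(\nabla\alpha_H)\mathbin{\odot}g,
\]
because $\nabla g=0$. By the Codazzi argument recalled above, $\nabla A$ is already symmetric as a four-tensor. It therefore suffices to show that the four-tensor
\[
T(W,X,Y,Z):=\big(\nabla_W(\alpha_H\mathbin{\odot}g)\big)(X,Y,Z)
\]
is symmetric in all four slots.

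First we compute $\nabla\alpha_H$ from \eqref{eq:closed-conformal}. Since $\alpha_H=V^\flat$ and $\nabla$ is metric,
\[
(\nabla_W\alpha_H)(X)=\langle\nabla_WV,X\rangle=\rho\,g(W,X).
\]
Substituting into \eqref{eq:symmetric-product} gives
\[
T(W,X,Y,Z)=\rho\big(g(W,X)g(Y,Z)+g(W,Y)g(X,Z)+g(W,Z)g(X,Y)\big).
\]

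This expression sums over the three ways of splitting $\{W,X,Y,Z\}$ into two unordered pairs, and each summand is a product of metric contractions. The set of these three pairings is permuted by any permutation of the four slots, so $T$ is invariant under all permutations. Hence $\nabla\mathring A=\nabla A+\frac{1}{n+2}T$ is symmetric.

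The only step requiring care is the identification $\nabla\alpha_H=\rho\,g$. Here conformality, including the closedness that makes $\nabla V$ a pure multiple of the identity, is essential: a general $\nabla\alpha_H$ would not have the symmetry that makes $T$ totally symmetric. Apart from that, the argument is just checking the definitions.
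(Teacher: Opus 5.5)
Your proof is correct and follows the paper's argument: the Codazzi equation makes $\nabla A$ fully symmetric, and $(\nabla_X\alpha_H)(Y)=\rho\,g(X,Y)$ together with $\nabla g=0$ turns $\nabla(\alpha_H\mathbin{\odot}g)$ into $\rho$ times the sum over the three pairings of the four slots. That sum is totally symmetric, so $\nabla\mathring A$ is symmetric.
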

\begin{proof}
The Codazzi equation gives full symmetry of $\nabla A$. Equation
\eqref{eq:closed-conformal} gives
$(\nabla_X\alpha_H)(Y)=\rho g(X,Y)$. Since $\nabla g=0$,
\[
\nabla_X(\alpha_H\mathbin{\odot}g)(Y,Z,Q)
=\rho\bigl\{g(X,Y)g(Z,Q)+g(X,Z)g(Y,Q)
             +g(X,Q)g(Y,Z)\bigr\}.
\]
The right-hand side is symmetric in $X,Y,Z,Q$, as required.
\end{proof}
\subsection{Closed conformal vector fields}

We first record the zero-set and level-set properties of a closed
conformal vector field.

\begin{lemma}\label{lem:zero-set}
Let $(P^n,g)$, $n\geq2$, be a connected smooth Riemannian manifold,
possibly with smooth boundary. Suppose that a vector field $X$ and a function $\sigma$, smooth up to the boundary, satisfy
\begin{equation}\label{eq:X-conformal}
        \nabla_YX=\sigma Y
\end{equation}
for every vector field $Y$.  If $X\not\equiv0$, then $\sigma(p)\neq0$ at every zero $p$ of $X$. Consequently, all zeros of $X$ are nondegenerate and isolated relative to $P$.  In particular, if $X$ vanishes on a nonempty smooth
hypersurface, possibly contained in $\partial P$, then $X\equiv0$ on $P$.
\end{lemma}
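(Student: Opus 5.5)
The plan is to exploit the fact that $X$ is determined along any curve by a linear ODE once $\sigma$ is known, and to obtain information on $\sigma$ from the Ricci identity.

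\textbf{Step 1: a differential identity for $\sigma$.} From $\nabla_YX=\sigma Y$ we compute
\[
R(Y,Z)X=\nabla_Y(\sigma Z)-\nabla_Z(\sigma Y)-\sigma[Y,Z]=(Y\sigma)Z-(Z\sigma)Y .
\]
Tracing in $Y$ gives $\Ric(Z,X)=-(n-1)\,Z\sigma$. Hence
\[
d\sigma=-\tfrac{1}{n-1}\Ric(X,\cdot).
\]
This uses $n\ge2$.

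\textbf{Step 2: a zero with $\sigma(p)=0$ forces $X\equiv0$.} Suppose $X(p)=0$ and $\sigma(p)=0$. Let $\gamma$ be any smooth curve from $p$ to $q$; one can be chosen inside $P$ because $P$ is connected, and the argument is local, so boundary points cause no trouble. Put $u=X\circ\gamma$ and $s=\sigma\circ\gamma$. Then
\[
\nabla_{\dot\gamma}u=s\,\dot\gamma,\qquad \dot s=-\tfrac{1}{n-1}\Ric(u,\dot\gamma).
\]
This is a linear homogeneous ODE system in $(u,s)$ along $\gamma$, with coefficients smooth up to the boundary. Its initial data vanish, so $u\equiv0$ and $s\equiv0$. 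Therefore $X(q)=0$, and $X\equiv0$, contradicting $X\not\equiv0$. This proves the first claim.

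\textbf{Step 3: nondegeneracy and isolation.} At a zero $p$ we have $\nabla X(p)=\sigma(p)\,\mathrm{Id}$. Since $\sigma(p)\ne0$ this map is invertible, so the zero is nondegenerate. Isolation then follows from the inverse function theorem applied in a chart, using a half-space chart at boundary points and $X$ smooth up to the boundary. More concretely, in normal coordinates at $p$ we have $X=\sigma(p)x+O(|x|^2)$, which is nonzero for small $x\ne0$.

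\textbf{Step 4: vanishing on a hypersurface.} If $X$ vanished on a nonempty hypersurface $\Sigma$, its zeros would not be isolated, since $\Sigma$ has dimension $n-1\ge1$. Alternatively, and more directly, pick $p\in\Sigma$ and $T\in T_p\Sigma$ nonzero. Then $\sigma(p)T=\nabla_TX=0$, because $X\equiv0$ along $\Sigma$, so $\sigma(p)=0$. Step 2 then gives $X\equiv0$.

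\textbf{Main obstacle.} The only delicate point is Step 2, in particular making sure the ODE argument is valid up to the boundary. This needs the smoothness of $X$ and $\sigma$ up to $\partial P$, and curves from interior or boundary points reaching all of $P$. Both are supplied by the hypotheses and by connectedness.
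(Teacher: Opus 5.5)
Your proposal is correct and follows the paper's proof essentially step for step. Both arguments use the curvature identity $R(Y,Z)X=(Y\sigma)Z-(Z\sigma)Y$, its trace $\Ric(X,\cdot)=-(n-1)\,d\sigma$, the homogeneous linear ODE for $(X,\sigma)$ along curves, the invertibility of $\nabla X(p)=\sigma(p)\operatorname{Id}$ at a zero, and the tangential-derivative argument forcing $\sigma=0$ on the hypersurface; your half-space-chart remark for isolation at boundary points only makes explicit what the paper leaves implicit.
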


\begin{proof}
For the standard zero-set statement, compare
\cite[Lemma~1(3)]{RosUrbano1998}. We give a direct proof that also covers dimension two and manifolds with smooth boundary, and establishes the additional uniqueness statements in the lemma.

Equation \eqref{eq:X-conformal} gives
\begin{equation}\label{eq:curvature-X}
        R(Y,Z)X=Y(\sigma)Z-Z(\sigma)Y.
\end{equation}
Taking a trace and using \eqref{eq:curvature-convention}, we obtain
\begin{equation}\label{eq:gradient-sigma}
        \Ric(X,Y)=-(n-1)Y(\sigma).
\end{equation}
Along a smooth curve $\gamma$, the pair $(X,\sigma)$ therefore
satisfies
\begin{equation}\label{eq:zero-ode}
 \nabla_{\dot\gamma}X=\sigma\dot\gamma,
 \qquad
 \frac{d}{dt}\sigma
 =-\frac{1}{n-1}\Ric(X,\dot\gamma).
\end{equation}
This is a homogeneous linear system for $(X,\sigma)$. If both
vanish at one point, uniqueness along piecewise smooth curves gives
$X\equiv0$ on the connected manifold, including its boundary.

Suppose now that $X$ is nontrivial and $X(p)=0$.  Then
$\sigma(p)\neq0$.  Hence
\begin{equation*}
        \nabla X(p)=\sigma(p)\operatorname{Id}
\end{equation*}
is invertible, and $p$ is an isolated nondegenerate zero.  Finally, if
$X$ vanishes on a hypersurface, its derivative in every tangential
direction of this hypersurface vanishes.  Equation \eqref{eq:X-conformal} gives
$\sigma=0$ on that hypersurface, and the first part of the proof shows
$X\equiv0$.
\end{proof}

In the compact setting, Ros--Urbano obtained the spherical structure of
the leaves orthogonal to the conformal vector field in the proof of
\cite[Theorem~3]{RosUrbano1998}.  We record
below a direct version that only requires compactness of the regular
leaves and connectedness of the regular set.

\begin{lemma}\label{lem:round-leaves}
Let $(P^n,g)$, $n\geq 2$, be a connected smooth Riemannian manifold without boundary, and
let $X\not\equiv 0$ be a smooth vector field satisfying
\eqref{eq:X-conformal}. Suppose that $X$ has a zero, that
$P^n\setminus Z(X)$ is connected, and that every regular leaf of
$X^\perp$ is compact, where $Z(X)=\{p\in P:X(p)=0\}$. Then every such leaf, with its induced metric, is isometric to a round $(n-1)$-sphere.
\end{lemma}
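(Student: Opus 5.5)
Since $\nabla X=\sigma\,\mathrm{Id}$, the vector field $X$ is the gradient of $f=\tfrac12|X|^2$: for every $Y$ we have $Y(f)=\langle\nabla_YX,X\rangle=\sigma\langle X,Y\rangle$, so $df=\sigma X^\flat$. The plan is to show that $f$ and $\sigma$ are constant on each regular leaf, that each leaf is totally umbilic, and then to compute its intrinsic curvature.

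\textbf{Step 1: basic structure.} On the regular set $P\setminus Z(X)$ the distribution $X^\perp$ is integrable. Indeed $X^\flat$ is closed, since $(\nabla_YX^\flat)(Z)=\sigma g(Y,Z)$ is symmetric. For $Y\perp X$ we get $Y(f)=0$, and by \eqref{eq:gradient-sigma} we get $Y(\sigma)=-\tfrac1{n-1}\Ric(X,Y)$. To see that the right-hand side vanishes, use \eqref{eq:curvature-X}: $R(Y,Z)X=Y(\sigma)Z-Z(\sigma)Y$. Pair with $X$ and use the antisymmetry $\langle R(Y,Z)X,X\rangle=0$. This gives $Y(\sigma)\langle Z,X\rangle=Z(\sigma)\langle Y,X\rangle$. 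Taking $Z=X$ and $Y\perp X$ yields $Y(\sigma)|X|^2=0$. So $\sigma$ is constant on leaves, and $d\sigma=\lambda X^\flat$ with $\lambda=X(\sigma)/|X|^2$. Consequently $\Ric(X,X)=-(n-1)\lambda|X|^2$.

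\textbf{Step 2: umbilicity.} Take the unit normal $\nu=X/|X|$ of a regular leaf $L$. For $Y,Z$ tangent to $L$,
\[
\langle\nabla_Y\nu,Z\rangle=\frac{\sigma}{|X|}\langle Y,Z\rangle .
\]
Hence $L$ is totally umbilic with constant principal curvature $\kappa=\sigma/|X|$, which is constant on $L$ by Step 1.

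\textbf{Step 3: Gauss equation.} For orthonormal $Y,Z$ tangent to $L$, the Gauss equation gives $K_L(Y,Z)=K_P(Y,Z)+\kappa^2$. Sectional curvatures of $P$ along $L$ are not directly controlled, so I would avoid them. Instead I would show that $(L,g_L)$ is isometric to a round sphere by using the flow of $\nu$ between leaves and the zero $p$ of $X$. Near $p$, Lemma \ref{lem:zero-set} gives $\nabla X(p)=\sigma(p)\mathrm{Id}$ with $\sigma(p)\neq0$. The level sets of $f$ near $p$ are then small geodesic-sphere-like hypersurfaces. Moreover $f$ is a function of the distance $r$ from $p$ along the integral curves of $\nu$, and these curves are unit-speed geodesics, since $\nabla_\nu\nu=0$ follows from umbilicity of the flow. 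Therefore the metric has the warped form
\[
g=dr^2+\psi(r)^2 g_0 \quad\text{near } p,
\]
with $\psi=|X|/c$ and $g_0$ the metric on the unit sphere $S^{n-1}\subset T_pP$. To derive this, note that the leaves are related by the flow of $X/|X|^2$, which is conformal on leaves: $\mathcal L_X g|_{X^\perp}=2\sigma g$, and $|X|$ scales by the same factor. So the normalized metric $g_L/|X|^2$ is invariant under the flow. As $L$ shrinks to $p$, this normalized metric converges to the round metric on the unit sphere of $T_pP$ divided by $\sigma(p)^2$. Hence every leaf near $p$ is round.

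\textbf{Step 4: propagation.} Connectedness of $P\setminus Z(X)$ lets every regular leaf be joined to the leaves near $p$ through the leaf space, which is one-dimensional and parametrized by $f$. Along the way the leaves are compact and the flow of $X/|X|^2$ carries leaf to leaf by homotheties of the induced metric. Thus every regular leaf is homothetic, hence isometric after rescaling, to a round $(n-1)$-sphere, with radius $|X|/|\sigma(p)|$ in the normalization above.

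\textbf{Main obstacle.} The hardest step is Step 4: showing that the flow map between leaves is globally defined. This requires controlling $|X|^2$ and ensuring the flow does not hit other zeros before reaching a given leaf. Compactness of the leaves should guarantee the flow lines stay in a compact tube, and connectedness of the regular set should supply a chain of leaves linking any leaf to a neighbourhood of $p$.
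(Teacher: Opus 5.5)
Your local analysis matches the paper's. The paper also flows along the unit field $E=X/|X|$ (your $X/|X|^2$ is a reparametrization), shows that $|X|^{-2}g_L$ is flow-invariant, and gets roundness near a zero $p$ from $r^{-2}g_r\to g_{\Sph^{n-1}}$ and $|X|/r\to|\sigma(p)|$. Two details there still need to be supplied:
\begin{itemize}
\item The leaves near $p$ must be shown to be exactly the geodesic spheres about $p$. Solve $\nabla_{\dot\gamma}X=\sigma\dot\gamma$ with $X(p)=0$ along each radial geodesic; this gives $X=(\int_0^r\sigma)\,\partial_r$, so $X$ is radial.
\item Each small geodesic sphere must be an entire leaf. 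This holds because it is compact, hence open and closed in its leaf.
\end{itemize}
Your opening sentence is also inaccurate: $df=\sigma X^\flat$ says $\nabla f=\sigma X$, not $X=\nabla f$.

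The genuine gap is Step~4, which you leave as a hope, and the mechanism you sketch does not work as stated. The function $f=\tfrac12|X|^2$ does not parametrize the leaf space. Indeed $df$ vanishes along any leaf where $\sigma=0$, and distinct leaves can share a value of $f$. For example, on the round $\Sph^n$ with $X$ the tangential part of $e_{n+1}$, one has $\sigma=-x_{n+1}$ and $f=\tfrac12(1-x_{n+1}^2)$, which is symmetric about the equator.

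Nor is a globally defined flow map between leaves needed, so there is nothing to control about $|X|^2$ or other zeros. The missing idea is a purely local propagation combined with chaining:
\begin{itemize}
\item For a compact regular leaf $\mathcal F$, compactness gives $\varepsilon>0$ such that the flow $\Phi_s$ of $E$ defines a product neighborhood $(-\varepsilon,\varepsilon)\times\mathcal F$ inside $P\setminus Z(X)$.
\item Since the flow preserves $X^\perp$, each $\Phi_s(\mathcal F)$ is a compact connected integral hypersurface. It is therefore open and closed in its leaf, hence a whole leaf.
\item Invariance of $|X|^{-2}g_L$ gives $\Phi_s^*g_{\Phi_s(\mathcal F)}=\frac{w(s)^2}{w(0)^2}g_{\mathcal F}$, where $w$ is the constant value of $|X|$ on the leaf. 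So roundness of one leaf in such a neighborhood gives roundness of all of them.
\item Join an arbitrary regular leaf to a small leaf near $p$ by a curve in the connected set $P\setminus Z(X)$. Cover the curve by finitely many of these product neighborhoods. Overlapping neighborhoods share an entire leaf, so roundness passes along the chain.
\end{itemize}
This is how the paper concludes, and it uses the compactness of leaves only locally.
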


\begin{proof}
Set  $w=|X|$ and $E=X/w$ on $P^n\setminus Z(X)$. If $Y\perp E$, then
\[
Y(w^2)
=2\langle\nabla_YX,X\rangle
=2\sigma\langle Y,X\rangle
=0.
\]
Using $X=wE$ and \eqref{eq:X-conformal}, we obtain
\begin{equation}\label{eq:E-geometry}
        \nabla_EE=0,
        \qquad
        E(w)=\sigma,
        \qquad
        \nabla_YE=\frac{\sigma}{w}Y
        \quad \text{for }Y\perp E.
\end{equation}
 These identities also imply that $dE^\flat=0$. Thus $E^\perp$ is integrable, and Cartan's formula gives
\[
\mathcal L_E(E^\flat)=\iota_E dE^\flat+d(E^\flat(E))=0.
\]
Hence the flow of $E$ preserves $E^\perp$. The integral curves of $E$ are geodesics and the leaves of $E^\perp$ are totally umbilical.

Let $s$ be the flow parameter of $E$. Since $E$ is unit and its flow preserves $E^\perp$, using the flow to identify nearby leaf patches, we write 
\(
g=ds^2+g_s
\).
Here $g_s$ is the induced metric on the corresponding leaf patch,
pulled back by the flow. Since $w$ is constant along each connected
leaf, it depends only on $s$ in these coordinates.
Equation \eqref{eq:E-geometry} gives
\[
        \frac{d}{ds}g_s
        =2\frac{\sigma}{w}g_s
        =2\frac{\partial_s w}{w}g_s.
\]
Hence
\begin{equation}\label{eq:normalized-leaf-metric}
        \frac{d}{ds}\bigl(w^{-2}g_s\bigr)=0,
\end{equation}
and therefore, in a flow neighborhood,
\begin{equation}\label{eq:local-warped}
        g=ds^2+w(s)^2g_0
\end{equation}
for a fixed metric $g_0$ on one of the leaves.

Now let $p\in Z(X)$.  By Lemma \ref{lem:zero-set},
$\sigma(p)\neq0$. In a sufficiently small geodesic normal
neighborhood of $p$, put $r=d(p,\cdot)$.
Along each unit-speed radial geodesic $\gamma_\vartheta$,
\[
\nabla_{\dot\gamma_\vartheta}X
=\sigma\dot\gamma_\vartheta,
\qquad X(p)=0.
\]
Consequently,
\[
X=f(r,\vartheta)\partial_r,
\qquad
f(r,\vartheta)
=\int_0^r \sigma(\gamma_\vartheta(t))\,dt.
\]
After shrinking the neighborhood, $\sigma$ has the fixed
sign of $\sigma(p)$. Hence $f$ has the same sign, and
\[
E=\operatorname{sgn}(\sigma(p))\,\partial_r.
\]
The geodesic spheres centered at $p$ are therefore integral
hypersurfaces of $E^\perp$. Since $Y(w)=0$ for $Y\perp E$
and these spheres are connected, $w=w(r)$ and $f=f(r)$.
Moreover,
\[
\frac{w(r)}{r}\longrightarrow|\sigma(p)|
\quad\text{as }r\to0.
\]
Each such sphere is an entire leaf because it is compact and is
both open and closed in the connected leaf containing it.

Write the metric in geodesic polar coordinates as
\(
        g=dr^2+g_r.
\)
Since $\partial_s=E=\operatorname{sgn}(\sigma(p))\partial_r$,
\eqref{eq:normalized-leaf-metric} gives
\[
        \frac{d}{dr}\bigl(w^{-2}g_r\bigr)=0.
\]
On the other hand, as $r\to0$,
\[
        r^{-2}g_r\longrightarrow g_{\Sph^{n-1}},
        \qquad
        \frac{w(r)}{r}\longrightarrow|\sigma(p)|.
\]
It follows that
\begin{equation}\label{eq:round-near-zero}
        g_r
        =\frac{w(r)^2}{\sigma(p)^2}g_{\Sph^{n-1}}.
\end{equation}
Thus all sufficiently small regular leaves around $p$ are round
$(n-1)$-spheres up to constant scaling.

It remains to propagate roundness to every regular leaf. Let
$\mathcal F$ be a compact regular leaf. 
Since $E$ is transverse to $\mathcal F$, its flow gives,
for sufficiently small $\varepsilon>0$, a product
neighborhood
\(
(-\varepsilon,\varepsilon)\times\mathcal F
\).
Each image $\Phi_s(\mathcal F)$ is an entire leaf:
it is a compact connected integral hypersurface and hence
is both open and closed in the connected leaf containing it.
By \eqref{eq:normalized-leaf-metric},
\[
\Phi_s^*g_{\Phi_s(\mathcal F)}
=\frac{w(s)^2}{w(0)^2}\,g_{\mathcal F},
\]
where $w(s)$ denotes the constant value of $w$ on
$\Phi_s(\mathcal F)$.
Thus roundness of one leaf in this neighborhood implies
roundness of every leaf in it.

Now fix an arbitrary regular leaf $\mathcal F$ and choose a regular
leaf $\mathcal F_0$ sufficiently close to $p$. By
\eqref{eq:round-near-zero}, $\mathcal F_0$ is round up to scaling.
Since $P^n\setminus Z(X)$ is connected, there is a curve in
$P^n\setminus Z(X)$ joining $\mathcal F_0$ to $\mathcal F$. Its compact
image can be covered by finitely many overlapping flow neighborhoods
of the type described above. Passing successively through these
neighborhoods, the roundness of $\mathcal F_0$ propagates to
$\mathcal F$. Hence every regular leaf of $X^\perp$ is, up to a
constant scaling, a round $(n-1)$-sphere.
\end{proof}

\subsection{Vanishing of Codazzi tensors}

\begin{lemma}\label{lem:codazzi-sphere}
Let $(\Sph^n_K,g)$, $n\geq2$, be a round sphere of constant sectional curvature $K>0$, and let $S$ be a symmetric trace-free covariant
$q$-tensor, $q\geq 2$. If $\nabla S$ is symmetric, then $S=0$. 
\end{lemma}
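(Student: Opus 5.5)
We argue by a Bochner-type integral identity, using that the sphere is closed so that integration by parts has no boundary terms. Let $S$ be symmetric, trace-free, of order $q\ge2$, with $\nabla S$ totally symmetric; in particular $S$ is Codazzi in every pair of slots. Total symmetry of $\nabla S$ together with $\tr S=0$ gives
\[
(\diver S)(X_2,\dots,X_q)=\sum_i(\nabla_{e_i}S)(e_i,X_2,\dots,X_q)=\sum_i(\nabla_{X_2}S)(e_i,e_i,X_3,\dots)=X_2(\tr_{12}S)(\cdots)=0 ,
\]
so $S$ is divergence free. The first step is the Weitzenböck formula. Compute $\Delta S=\sum_i\nabla^2_{e_i,e_i}S$. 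Use the symmetry of $\nabla S$ to move the index $i$ into the last slot, commute the covariant derivatives, and use $\diver S=0$. The result expresses $\Delta S$ as a sum of curvature terms. On a space of constant curvature $K$ we have $R(X,Y)Z=K(\langle Y,Z\rangle X-\langle X,Z\rangle Y)$, and contracting these terms against $S$ and using trace-freeness gives
\[
\langle\Delta S,S\rangle = c(n,q)\,K\,|S|^2,\qquad c(n,q)=q(n+q-2)>0 .
\]
This is the standard identity for trace-free symmetric Killing/Codazzi tensors. The coefficient arises as $q$ times the Ricci term $(n-1)$ plus $q(q-1)$ times the mixed term, with $q$ of the $-1$ contributions adjusted. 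I will verify the exact constant, but only its positivity matters.

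The second step is to integrate over the closed sphere:
\[
-\int|\nabla S|^2=\int\langle\Delta S,S\rangle=c\,K\int|S|^2 .
\]
The left side is $\le0$ and the right side is $\ge0$, so both vanish. Since $K>0$ and $c>0$, this forces $S=0$.

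The main obstacle is getting the sign of the curvature term right in the commutation step for general $q$. The safest route is to compute in an orthonormal frame. Write $S_{a_1\dots a_q,ii}=S_{ia_2\dots a_q,a_1 i}$ and commute the last two indices via the Ricci identity. The first term $S_{ia_2\dots,ia_1}$ equals $(\diver S)_{,a_1}=0$. Each commutator term $-\sum R\,S$ then becomes, for constant curvature, $K\bigl((n-1)S+\sum(\text{trace terms})-(q-1)S\bigr)$ up to the index bookkeeping, and the trace terms vanish. This yields the coefficient $K\bigl((n-1)+(q-1)(\ldots)\bigr)$. To check the sign, test the case $q=2$, where $S$ is a trace-free Codazzi 2-tensor. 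There the classical identity is
\[
\tfrac12\Delta|S|^2=|\nabla S|^2+nK|S|^2 ,
\]
consistent with $c(n,2)=n$. Integration then gives $S=0$, which is Tsukamoto's/Ferus's classical vanishing theorem, and this confirms the sign for general $q$.
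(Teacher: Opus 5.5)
Your strategy is the same as the paper's: trace-freeness and total symmetry of $\nabla S$ give $\diver S=0$, the Ricci identity turns $\nabla^a\nabla_aS$ into curvature terms, and integrating $\langle\nabla^a\nabla_aS,S\rangle$ over the closed sphere forces $S=0$. The gap is in the one nontrivial step, the value and especially the sign of the curvature constant, which your write-up does not establish. Your stated constant $c(n,q)=q(n+q-2)$ is wrong: at $q=2$ it gives $2n$, which contradicts the identity $\tfrac12\Delta|S|^2=|\nabla S|^2+nK|S|^2$ that you cite as confirmation. Your frame sketch also gives the mixed terms the contribution $-(q-1)KS$, which would make $c=n-q$. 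That is $\le 0$ exactly when $q\ge n$, for instance for cubic tensors on $\Sph^2$, which is the case needed on the two-dimensional leaves of $V^\perp$ when $\dim M=3$. So ``only positivity matters'' is precisely the point left unsupported, and a check at $q=2$ does not by itself fix the sign for larger $q$.

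The repair is a short computation. Use $R(X,Y)Z=K(\langle Y,Z\rangle X-\langle X,Z\rangle Y)$ and $(R(X,Y)S)(Z_1,\dots,Z_q)=-\sum_rS(\dots,R(X,Y)Z_r,\dots)$, and discard $\nabla_{i_1}(\diver S)=0$. This gives
\[
\nabla^a\nabla_aS_{i_1\cdots i_q}
=-\sum_aS\bigl(R(e_a,e_{i_1})e_a,e_{i_2},\dots,e_{i_q}\bigr)
-\sum_{r=2}^q\sum_aS\bigl(e_a,e_{i_2},\dots,R(e_a,e_{i_1})e_{i_r},\dots,e_{i_q}\bigr).
\]
Since $\sum_aR(e_a,e_{i_1})e_a=-(n-1)Ke_{i_1}$, the first sum equals $(n-1)KS_{i_1\cdots i_q}$. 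Since $R(e_a,e_{i_1})e_{i_r}=K(\delta_{i_1i_r}e_a-\delta_{ai_r}e_{i_1})$, each $r\ge2$ term splits into two parts. One part is $-K\delta_{i_1i_r}$ times a trace of $S$, which vanishes. The other is $+KS_{i_1\cdots i_q}$, after swapping slots $1$ and $r$ using the symmetry of $S$. Hence $\nabla^a\nabla_aS=(n+q-2)KS$, exactly the paper's identity, and $-\int|\nabla S|^2=(n+q-2)K\int|S|^2$ gives $S=0$. Your $q=2$ sanity check becomes a valid argument only in this form. All $q-1$ mixed terms are the same expression, so the single mixed term at $q=2$ determines each of them as $+KS$, and the resulting constant is $n+q-2$, not $q(n+q-2)$.
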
 
 
\begin{proof} 
Since $\nabla S$ is 
symmetric and $S$ is trace-free, 
\begin{equation}\label{eq:T-divfree} 
        \begin{aligned} 
\nabla^{a}S_{a i_2\cdots i_q} 
&= g^{ab}\nabla_b S_{a i_2\cdots i_q} \\ 
&= g^{ab}\nabla_{i_2} S_{ab i_3\cdots i_q} \\ 
&= \nabla_{i_2}\left(g^{ab}S_{ab i_3\cdots i_q}\right) \\ 
&= 0. 
\end{aligned} 
\end{equation} 
At a point where the frame is normal, use the constant-curvature 
identity 
\begin{equation*} 
 R_{b i a}{}^c 
 =K\bigl(\delta_{ia}\delta_b^c 
          -\delta_{ba}\delta_i^c\bigr). 
\end{equation*} 
The Codazzi equation and \eqref{eq:T-divfree} give 
\begin{align*} 
\nabla^a\nabla_aS_{i_1\cdots i_q} 
&=g^{ab}\nabla_b\nabla_{i_1}S_{a i_2\cdots i_q}\\ 
 &=g^{ab}([\nabla_b,\nabla_{i_1}]S)_{a i_2\cdots i_q}\\ 
 &=-g^{ab}R_{b i_1 a}{}^cS_{c i_2\cdots i_q}\\ 
 &\quad- 
   \sum_{r=2}^q g^{ab}R_{b i_1 i_r}{}^c 
   S_{a i_2\cdots i_{r-1}c i_{r+1}\cdots i_q}. 
\end{align*} 
The first curvature term contributes $(n-1)K S_{i_1\cdots i_q}$. 
For each $r\geq2$, the part containing 
$\delta_{i_1i_r}$ is a trace of $S$ and vanishes, while the remaining 
part is $K S_{i_1\cdots i_q}$.  Hence 
\begin{equation}\label{eq:rough-T} 
        \nabla^a\nabla_aS=(n+q-2)KS. 
\end{equation} 
Taking the $L^2$ inner product with $S$ and integrating gives 
\begin{equation*} 
 -\int_{\mathbb S^n_K}|\nabla S|^2 
 =(n+q-2)K\int_{\mathbb S^n_K}|S|^2. 
\end{equation*} 
Both sides can have the stated signs only when $S=0$. 
\end{proof}

The next two lemmas turn tensor information into vanishing results.

\begin{lemma}\label{lem:harmonic-contraction}
Let $S$ be a symmetric trace-free cubic tensor on a Riemannian manifold $(P^n,g)$ such that
$\nabla S$ is symmetric.  Suppose that a vector field $X$ satisfies
$\nabla_YX=\sigma Y$ for every vector field $Y$. Then the one-form $\eta=S(X,X,\cdot)$
is closed and coclosed.
\end{lemma}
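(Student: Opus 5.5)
Both claims are direct index computations, using only the full symmetry of $\nabla S$, the vanishing trace of $S$, and the relation $\nabla X=\sigma\,\mathrm{Id}$. In a local orthonormal frame, write $\eta_k=S_{ijk}X^iX^j$. The Leibniz rule together with $\nabla_a X^i=\sigma\delta_a^i$ gives
\[
\nabla_a\eta_k=(\nabla_aS)_{ijk}X^iX^j+2\sigma S_{akj}X^j .
\]

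\emph{Closedness.} We antisymmetrize the expression for $\nabla_a\eta_k$ in $a$ and $k$. The first term is symmetric in $a,k$, because $\nabla S$ is a fully symmetric four-tensor, so $\nabla_aS_{ijk}=\nabla_kS_{ija}$. The second term is also symmetric in $a,k$, because $S$ is symmetric. Hence $\nabla_a\eta_k-\nabla_k\eta_a=0$. Since the Levi-Civita connection is torsion free, this is exactly $d\eta=0$.

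\emph{Coclosedness.} We trace the same expression over $a=k$. The second term becomes $2\sigma g^{ak}S_{akj}X^j$, which vanishes because $S$ is trace-free. For the first term we use full symmetry once more to move the traced indices onto $S$:
\[
g^{ak}\nabla_aS_{ijk}=\nabla_i\bigl(g^{ak}S_{akj}\bigr)=0 .
\]
This is the same computation as \eqref{eq:T-divfree}, and it uses $\nabla g=0$. Therefore $\diver\eta=0$, that is, $\delta\eta=0$.

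Thus $\eta=S(X,X,\cdot)$ is closed and coclosed. There is no real obstacle here. The one step needing care is the divergence of the $\nabla S$ term, where full symmetry is what lets the trace pass onto $S$ itself. The argument is pointwise, so it holds on any Riemannian manifold, including at boundary points, and it needs no dimension restriction.
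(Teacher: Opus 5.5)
Your proposal is correct and follows essentially the same route as the paper: it uses the same Leibniz formula $(\nabla_Y\eta)(Z)=(\nabla_YS)(X,X,Z)+2\sigma S(Y,X,Z)$, whose symmetry in $Y,Z$ gives $d\eta=0$. Its trace vanishes by the full symmetry of $\nabla S$ together with the trace-freeness of $S$, which gives $\delta\eta=0$.
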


\begin{proof}
For vector fields $Y$ and $Z$, 
\begin{equation}\label{eq:derivative-eta} 
 (\nabla_Y\eta)(Z) 
 =(\nabla_YS)(X,X,Z)+2\sigma S(Y,X,Z). 
\end{equation} 
The right-hand side of \eqref{eq:derivative-eta} is symmetric in $Y$ and $Z$, so $d\eta=0$. 

Let $\{e_i\}$ be a local orthonormal frame.  The symmetry of $\nabla S$ 
and the trace-free property give
\begin{equation*} 
 \sum_i(\nabla_{e_i}S)(X,X,e_i)=0, 
 \qquad 
 \sum_iS(e_i,X,e_i)=0. 
\end{equation*} 
Taking the trace of \eqref{eq:derivative-eta}, we obtain 
\begin{align*} 
-\delta\eta 
&=\sum_i (\nabla_{e_i}\eta)(e_i)\\ 
&=\sum_i (\nabla_{e_i}S)(X,X,e_i) 
  +2\sigma\sum_i S(e_i,X,e_i)\\ 
&=0. 
\end{align*} 
\end{proof}
\begin{remark}
In the proof of \cite[Theorem~3]{RosUrbano1998}, Ros--Urbano
introduced the vector field
\[
Z_{\rm RU}
=
J h(J\bar H,J\bar H)
-\frac{3n}{n+2}|\bar H|^2J\bar H,
\]
where $\bar H=\frac1n\tr h$ is the normalized mean curvature vector, and proved that $Z_{\rm RU}$ is closed and
divergence-free.  With our convention $H=\tr h$, put $V=JH$ and
$\eta=\mathring A(V,V,\cdot)$.  A direct computation gives
\[
\eta^\sharp
=
-Jh(V,V)+\frac{3}{n+2}|V|^2V
=
-n^2 Z_{\rm RU}.
\]
%Thus Lemma~\ref{lem:harmonic-contraction} may be viewed as the tensorial formulation of the
%harmonicity calculation used by Ros--Urbano.
\end{remark}
\begin{lemma}\label{lem:leaf-reduction}
Let $U$ be an open subset of a Riemannian manifold $(P^n,g)$, $n\geq2$.
Suppose that $X$ is nowhere zero on $U$ and satisfies
$\nabla_YX=\sigma Y$. Assume that every leaf of $X^\perp$ in $U$, with induced metric, is isometric to a round $(n-1)$-sphere.
Let $S$ be a symmetric trace-free cubic tensor on $U$ with symmetric
covariant derivative. If
\begin{equation}\label{eq:two-X-zero}
        S(X,X,\cdot)=0,
\end{equation}
then $S=0$ on $U$.
\end{lemma}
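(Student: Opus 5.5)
We split $S$ according to the orthogonal splitting $TU=\R E\oplus E^\perp$, where $E=X/|X|$ and $w=|X|$. The hypothesis \eqref{eq:two-X-zero} says exactly that $S(E,E,\cdot)=0$. Set
\[
\beta(Y,Z)=S(E,Y,Z),\qquad T(Y,Z,W)=S(Y,Z,W)\qquad\text{for } Y,Z,W\perp E.
\]
Then $S$ is determined by the symmetric $2$-tensor $\beta$ and the cubic tensor $T$ on $E^\perp$. The trace-free condition gives
\[
\tr\beta=S(E,E,E)+\sum_{a}S(E,e_a,e_a)=0,
\]
since $S(E,E,E)=0$. Also $\tr T(\cdot,\cdot,W)=-S(E,E,W)=0$. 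So along each leaf both $\beta$ and $T$ are symmetric and trace-free. The plan is to show that $\beta|_{\mathcal F}$ and $T|_{\mathcal F}$ have symmetric covariant derivatives with respect to the induced connection on each leaf $\mathcal F$. Then Lemma~\ref{lem:codazzi-sphere} applies on the round sphere $\mathcal F$, with $q=2$ and $q=3$, giving $\beta=T=0$ and hence $S=0$. For $n=2$ the leaves are circles, which Lemma~\ref{lem:codazzi-sphere} does not cover. There $E^\perp$ is one-dimensional, so trace-freeness alone gives $\beta=T=0$ directly.

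The computation uses \eqref{eq:E-geometry}: $\nabla_EE=0$, and $\nabla_YE=(\sigma/w)Y$ for $Y\perp E$. So each leaf is totally umbilic with second fundamental form $-(\sigma/w)g\,E$, and its induced connection is $\bar\nabla_YZ=\nabla_YZ+(\sigma/w)\langle Y,Z\rangle E$.

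First, differentiate the identity $S(E,E,Z)=0$ in a tangential direction $Y$:
\[
0=(\nabla_YS)(E,E,Z)+2\frac{\sigma}{w}S(Y,E,Z)+S(E,E,\nabla_YZ).
\]
The last term vanishes. By symmetry of $\nabla S$, the first term equals $(\nabla_ES)(E,Y,Z)$.

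Second, compute $\bar\nabla\beta$ for $Y,Z,W\perp E$:
\[
(\bar\nabla_Y\beta)(Z,W)=(\nabla_YS)(E,Z,W)+\frac{\sigma}{w}S(Y,Z,W)-\frac{\sigma}{w}\langle Y,Z\rangle S(E,E,W)-\frac{\sigma}{w}\langle Y,W\rangle S(E,Z,E).
\]
The last two terms vanish by hypothesis. The remaining two terms are fully symmetric in $Y,Z,W$, by symmetry of $\nabla S$ and of $S$. So $\bar\nabla\beta$ is symmetric, and Lemma~\ref{lem:codazzi-sphere} with $q=2$ gives $\beta=0$.

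Third, with $\beta=0$ in hand, compute
\[
(\bar\nabla_YT)(Z,W,Q)=(\nabla_YS)(Z,W,Q)+\frac{\sigma}{w}\bigl(\langle Y,Z\rangle S(E,W,Q)+\langle Y,W\rangle S(Z,E,Q)+\langle Y,Q\rangle S(Z,W,E)\bigr).
\]
Every term in the bracket involves $\beta$, so all of them vanish and $\bar\nabla T=\nabla S|_{E^\perp}$ is symmetric. Lemma~\ref{lem:codazzi-sphere} with $q=3$ then gives $T=0$. Together with $\beta=0$ and $S(E,E,\cdot)=0$, this yields $S=0$ on $U$.

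The main obstacle I expect is the bookkeeping of the correction terms in $\bar\nabla$ versus $\nabla$. The key is to kill $\beta$ first, since its vanishing is what makes $T$ a Codazzi tensor on the leaf. I also need to check the sign conventions in the Gauss formula for the leaves, but any sign error only affects terms that vanish anyway. Finally, the leaves must be complete round spheres, which is assumed, so that the integration in Lemma~\ref{lem:codazzi-sphere} is legitimate on each leaf.
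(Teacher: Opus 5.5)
Your proposal is correct and follows the paper's proof step for step. The case $n=2$ is handled by trace-freeness alone. For $n\geq3$, the leaf tensor $S(E,\cdot,\cdot)|_{T\mathcal L}$ is killed first via Lemma~\ref{lem:codazzi-sphere} with $q=2$, and then $S|_{T\mathcal L}$ via $q=3$, using the same umbilic-leaf connection formula. Two small points do not affect the argument. The correction terms in your formula for $\bar\nabla T$ should carry a minus sign, but they vanish once $\beta=0$. Your first differentiation of $S(E,E,Z)=0$ is never used.
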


\begin{proof}
Set $w=|X|$, $E=X/w$. Equation \eqref{eq:two-X-zero} gives $S(E,E,\cdot)=0$.

If $n=2$, choose a local orthonormal frame $(E,T)$.
Trace-freeness gives 
\[
S(E,T,T)=-S(E,E,E)=0, \qquad 
S(T,T,T)=-S(E,E,T)=0.
\]
Together with $S(E,E,\cdot)=0$, these identities give $S=0$.

Assume $n\geq3$, and fix a leaf $\mathcal{L}$ of $X^\perp$ in $U$. Define
\begin{equation*}
        B(Y,Z)=S(E,Y,Z), \qquad Y,Z\in T\mathcal L.
\end{equation*}
For an orthonormal frame $\{e_1, \cdots, e_{n-1}\}$ tangent to $\mathcal L$,  trace-freeness of $S$ gives
\begin{equation*}
        \tr_\mathcal{L}B=\sum_{\alpha=1}^{n-1}S(E,e_\alpha,e_\alpha)=-S(E,E,E)=0.
\end{equation*}
For $Y,Z\in T\mathcal{L}$, \eqref{eq:E-geometry} gives
\[
\nabla_YE=\frac{\sigma}{w}Y,
\qquad
\nabla_YZ=\nabla_Y^\mathcal{L}Z-\frac{\sigma}{w}
\langle Y,Z\rangle E.
\]
Using these identities and $S(E,E,\cdot)=0$, we obtain
\begin{equation}\label{eq:B-derivative}
 (\nabla_YS)(E,Z,Q)
 =(\nabla_Y^\mathcal{L}B)(Z,Q)-\frac{\sigma}{w}S(Y,Z,Q)
\end{equation}
for $Y,Z,Q\in T\mathcal{L}$.  The symmetry of $\nabla S$ and $S$ therefore implies that
$\nabla^\mathcal{L}B$ is symmetric. Since $\mathcal L$ is a round sphere of dimensional $n-1\geq 2$,  Lemma \ref{lem:codazzi-sphere}, with $q=2$, gives $B=0$.

Now let $S^\mathcal{L}=S|_{T\mathcal{L}^3}$.  Its trace satisfies
\[
(\tr_{\mathcal L}S^{\mathcal L})(Z)=-S(E,E,Z)=0.
\]
Since $B=0$, we have
\begin{equation*}
        (\nabla_YS)(Z,Q,R)=(\nabla_Y^\mathcal{L}S^\mathcal{L})(Z,Q,R)
\end{equation*}
for $Y,Z,Q,R\in T\mathcal L$.  
Thus $\nabla^\mathcal{L}S^\mathcal{L}$ is symmetric.
Lemma \ref{lem:codazzi-sphere}, with $q=3$, gives $S^\mathcal{L}=0$. 
The identities $S(E,E,\cdot)=0$, $B=0$, and $S^{\mathcal L}=0$ account for all components of $S$ along $\mathcal L$.
Since the leaf is arbitrary, $S=0$ on $U$.
\end{proof}

\subsection{Legendrian capillary boundary identities}
 Let $\iota:M^n\to\overline{\mathbb B}^{2n}$, $n\geq 2$, be a smooth Lagrangian immersion with Legendrian capillary boundary $\Sigma=\partial M$. We use the notation and contact angle convention in \eqref{eq:intro-angle}.

The following boundary identities follow from \cite[Lemma~2.1]{LuoSun2021}; see also \cite[Proposition~2.11]{LiWangWeng2021} for the curvature-line characterization in the surface case. We include a direct proof in the notation used below.

\begin{lemma}\label{lem:boundary-identities}
For $U,Z\in T\Sigma$, one has
\begin{equation}\label{eq:boundary-basic}
\nabla_U\nu=\sin\theta U,
\qquad
A(U,Z,\nu)=\cos\theta g(U,Z),
\qquad
A(U,\nu,\nu)=0.
\end{equation}
\end{lemma}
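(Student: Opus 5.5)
The plan is to differentiate the relation \eqref{eq:intro-angle} along the boundary, using that $N=\iota$ is the position vector, so that $D_UN=U$ for every $U\in T\Sigma$. We use three facts about the boundary. First, $\Sigma$ is Legendrian, so $\langle JN,U\rangle=0$ for $U\in T\Sigma$. Second, $T\Sigma\subset TM$ with $\nu\perp T\Sigma$. Third, $TM$ is Lagrangian.

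We begin by describing the tangent and normal spaces along $\Sigma$. At a boundary point, $TM=T\Sigma\oplus\R\nu$. Since $\Sigma$ is Legendrian in $\Sph^{2n-1}$, both $N$ and $JN$ are orthogonal to $T\Sigma$, and so is $J(T\Sigma)$. Set $\tilde\nu:=\cos\theta\,N-\sin\theta\,JN$. Then $\nu$ and $\tilde\nu$ form an orthonormal basis of $\operatorname{span}\{N,JN\}$, and $J\nu=\sin\theta\,JN-\cos\theta\,N=-\tilde\nu$ is normal to $M$. With this notation, $\C^n=T\Sigma\oplus J(T\Sigma)\oplus\operatorname{span}\{\nu,J\nu\}$, the tangent space of $M$ is $T\Sigma\oplus\R\nu$, and the normal space of $M$ is $J(T\Sigma)\oplus\R J\nu$.

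Next we differentiate $\nu=\sin\theta\,N+\cos\theta\,JN$ along $U\in T\Sigma$, with $\theta$ constant on each boundary component. Since $D_UN=U$ and $D_U(JN)=JU$, this gives
\[
D_U\nu=\sin\theta\,U+\cos\theta\,JU.
\]
On the other hand, the Gauss formula gives $D_U\nu=\nabla_U\nu+h(U,\nu)$, where $\nabla_U\nu\in TM$ and $h(U,\nu)$ is normal to $M$. Comparing the tangential parts, $\nabla_U\nu=\sin\theta\,U$, which is the first identity. Comparing the normal parts, $h(U,\nu)=\cos\theta\,JU$. Pairing with $JZ$ for $Z\in T\Sigma$ gives $A(U,\nu,Z)=\cos\theta\,g(U,Z)$. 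Pairing with $J\nu$ gives $A(U,\nu,\nu)=\cos\theta\langle JU,J\nu\rangle=\cos\theta\langle U,\nu\rangle=0$. The symmetry of $A$ then yields the second and third identities in the stated order of arguments.

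The main point to verify is the normal/tangential bookkeeping: that $JU$ is genuinely normal to $M$ and that $\nu$ lies in $\operatorname{span}\{N,JN\}$. The first holds because $M$ is Lagrangian. The second is built into \eqref{eq:intro-angle}, which holds with $\theta$ constant on each component, so no derivative of $\theta$ appears. Everything else is a direct computation. The degenerate angles $\theta=0$ and $\theta=\pi$ need no separate treatment, since the argument never divides by $\sin\theta$ or $\cos\theta$.
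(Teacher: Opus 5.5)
Your proposal is correct and follows the paper's proof: differentiate \eqref{eq:intro-angle} along $U\in T\Sigma$ using $D_UN=U$ and $D_U(JN)=JU$, split $D_U\nu=\sin\theta\,U+\cos\theta\,JU$ into tangential and normal parts via the Lagrangian condition, and pair $h(U,\nu)=\cos\theta\,JU$ with $JZ$ and $J\nu$. Your explicit orthogonal decomposition $\C^n=T\Sigma\oplus J(T\Sigma)\oplus\operatorname{span}\{\nu,J\nu\}$ spells out the bookkeeping the paper leaves implicit, but it adds no new ingredient.
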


\begin{proof}
Since $N=\iota$ on the unit sphere,
\begin{equation*}
D_UN=U,
\qquad D_U(JN)=JU.
\end{equation*}
The angle is constant, so differentiating \eqref{eq:intro-angle} gives
\begin{equation}\label{eq:Dnu}
D_U\nu=\sin\theta U+\cos\theta JU.
\end{equation}
As the immersion is Lagrangian, taking the tangent and normal components of \eqref{eq:Dnu}, we get
\begin{equation*}
\nabla_U\nu=\sin\theta U,
\qquad h(U,\nu)=\cos\theta JU.
\end{equation*}
Consequently,
\begin{equation*}
A(U,\nu,Z)=\langle h(U,\nu),JZ\rangle
=\cos\theta g(U,Z),
\qquad
A(U,\nu,\nu)=0.
\end{equation*}
The symmetry of $A$ completes the proof.
\end{proof}

When $n=2$, let $T$ be a unit tangent field along $\Sigma$ and define
the signed geodesic curvature by $\nabla_TT=k_g\nu$. Metric compatibility gives
\[
k_g=\langle \nabla_TT,\nu\rangle=-\langle T,\nabla_T\nu\rangle=-\sin\theta.
\]
Thus
\begin{equation}\label{eq:surface-boundary-identities}
A(T,\nu,\nu)=0,
\qquad
A(T,T,\nu)=\cos\theta,
\qquad
k_g=-\sin\theta.
\end{equation}

Assume in addition that the Maslov form is conformal, so that $V=JH$ satisfies \eqref{eq:closed-conformal}. Along the boundary,
write
\begin{equation}\label{eq:V-decomposition}
V=W+a\nu,
\qquad W\in T\Sigma,
\qquad a=\alpha_H(\nu)=\langle V,\nu\rangle.
\end{equation}
Here $a$ is a smooth function on $\Sigma$.

For $U,Z\in T\Sigma$, the first identity in \eqref{eq:boundary-basic} gives
\begin{equation}\label{eq:boundary-second-fundamental}
\nabla_UZ=\nabla_U^\Sigma Z-\sin\theta g(U,Z)\nu.
\end{equation}
Differentiating \eqref{eq:V-decomposition} and using this identity, 
we obtain
\[
\begin{aligned}
\nabla_UV&=\nabla_UW+U(a)\nu+a\nabla_U\nu\\
&=\nabla_U^\Sigma W+a\sin\theta U+(U(a)-\sin\theta\langle W,U\rangle)\nu.
\end{aligned}
\]
Comparing the tangential and conormal components with $\nabla_U V=\rho U$ yields
\begin{equation}\label{eq:Wa-equations}
\nabla_U^\Sigma W=(\rho-a\sin\theta)U,
\qquad
U(a)=\sin\theta\langle W,U\rangle.
\end{equation}
Thus $W$ is a closed conformal vector field on $\Sigma$ with
\begin{equation}\label{eq:boundary-sigma}
\sigma=\rho-a\sin\theta,
\end{equation}
and
\begin{equation}\label{eq:W-conformal}
\nabla_U^\Sigma W=\sigma U.
\end{equation}

\section{Boundary rigidity for surfaces}\label{sec:surface-case}

Throughout this section, we assume the hypotheses of Theorem~\ref{thm:main} with $n=2$ and write $M$ for $M^2$. Our aim is to  show that $W=0$ along $\partial M$.
 In an oriented unit frame $(T,\nu)$
along the boundary, write
\begin{equation}\label{eq:surface-W-scalar}
        W=wT,
        \qquad w=\langle JH,T\rangle.
\end{equation}
At a boundary point where $H=0$, one already has $W=0$. It therefore suffices to work near boundary points where $H\neq0$.

In a local isothermal coordinate $z=x+iy$, write 
$\partial_z=\frac12(\partial_x-i\partial_y)$ and extend tensors complex linearly.
Define
\begin{equation}\label{eq:surface-cubic}
        \Phi=A(\partial_z,\partial_z,\partial_z)\,dz^3.
\end{equation}
For a Lagrangian surface with conformal Maslov form, $\Phi$ is
holomorphic \cite[Corollary~1]{CastroUrbano1993};
see also \cite[Proposition~2.14]{LiWangWeng2021}. 

\subsection{One-sided Castro--Urbano coordinates}

The interior normal form is due to Castro and Urbano
\cite[Section~3]{CastroUrbano1993}. We give a proof valid up to a smooth boundary, without assuming
that the boundary is a coordinate line.

\begin{lemma}\label{lem:surface-normal-form}
Let $p\in\partial M$ with $H(p)\neq 0$.  There exist a relative neighborhood $U$ of $p$, a complex coordinate
$z=x+iy$ that is conformal on $U\cap M^\circ$ and smooth up to
$U\cap\partial M$, a smooth real function $u$ and constants
\begin{equation*}
        \alpha,\lambda\in\R,
        \qquad \mu\geq0,
\end{equation*}
such that
\begin{equation}\label{eq:surface-normal-metric}
        g=e^{2u}|dz|^2,
        \qquad u_y=0,
\end{equation}
\begin{equation}\label{eq:surface-normal-cubic}
        4A(\partial_z,\partial_z,\partial_z)
        =\mu e^{i\alpha}e^{\lambda z},
\end{equation}
and
\begin{equation}\label{eq:surface-normal-ode}
        u_{xx}
        +\frac{e^{4u}-\mu^2e^{2\lambda x}e^{-4u}}{2}=0
        \quad\text{on }U\cap M^\circ.
\end{equation}
Moreover,
\begin{equation}\label{eq:surface-HCU}
        JH=-2\partial_x.
\end{equation}
These identities hold up to $U\cap \partial M$.
\end{lemma}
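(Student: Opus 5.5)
The plan is to build the coordinate $z$ directly from the closed conformal field $V=JH$, rather than starting from an isothermal chart. Since $H(p)\neq0$, $V$ is nowhere zero on a small relative neighborhood $U$ of $p$. I take $U$ to be a half-ball chart, which is contractible and smooth up to $U\cap\partial M$. On $U$ let $V^{\perp}$ denote $V$ rotated by $+\pi/2$ in the oriented tangent plane, and consider the one-forms
\[
\omega_1=-\frac{2V^\flat}{|V|^2},\qquad \omega_2=-\frac{2(V^{\perp})^\flat}{|V|^2}.
\]

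\textbf{Step 1: closedness of $\omega_1,\omega_2$.} From \eqref{eq:closed-conformal} we get $\nabla|V|^2=2\rho V$. Together with $dV^\flat=0$ this gives $d\omega_1=0$. For $\omega_2$, note $\nabla_X V^\perp=\rho X^\perp$, so $V^\perp$ is Killing and $d(V^\perp)^\flat=2\rho\,\mathrm{vol}_g$. The term $d(|V|^{-2})\wedge (V^\perp)^\flat$ contributes $-2\rho\,|V|^{-4}\,V^\flat\wedge(V^\perp)^\flat=-2\rho\,|V|^{-2}\mathrm{vol}_g$, which cancels it, so $d\omega_2=0$.

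\textbf{Step 2: the coordinate and the metric.} By the Poincaré lemma on the contractible chart $U$ (integrating along paths, which works up to the boundary), $\omega_1=dx$ and $\omega_2=dy$ for functions $x,y$ smooth up to $U\cap\partial M$. The forms $dx,dy$ are orthogonal of equal length $2/|V|$, so $z=x+iy$ is conformal, and after shrinking $U$ it is a diffeomorphism onto its image. The dual frame is $\partial_x=-V/2$, which is \eqref{eq:surface-HCU}, and $g=e^{2u}|dz|^2$ with $e^{2u}=|V|^2/4$. Since $\nabla|V|^2\parallel V\parallel\partial_x$, $u$ depends only on $x$, so $u_y=0$.

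\textbf{Step 3: the cubic differential.} Write $f=4A(\partial_z,\partial_z,\partial_z)$; it is holomorphic by \cite[Corollary~1]{CastroUrbano1993}. Decompose $A=\mathring A-\frac14\alpha_H\odot g$. Then $\mathring A$ is determined by $f$, and $\alpha_H$ by $u$ through $V=-2\partial_x$. The Gauss equation $K=-e^{-2u}\Delta u$ then expresses $|f|^2$ as a function of $u,u_x,u_{xx}$, hence of $x$ alone. A holomorphic $f$ with $|f|$ depending only on $x$ either vanishes identically ($\mu=0$) or has no zeros: zeros of a nonzero holomorphic function are isolated, while $\{|f|=0\}$ here is a union of vertical lines. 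In the second case $\log|f|$ is harmonic and depends only on $x$, hence is affine, $\log|f|=\lambda x+c$. Therefore $f=\mu e^{i\alpha}e^{\lambda z}$, which is \eqref{eq:surface-normal-cubic}. Substituting back into the Gauss equation yields \eqref{eq:surface-normal-ode}.

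All quantities involved are smooth up to $U\cap\partial M$, so every identity extends to the boundary by continuity.

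The main obstacle is the bookkeeping in the Gauss equation. One has to express $|A|^2$ and $|H|^2$ in the frame $\partial_x,\partial_y$ and obtain exactly the constants in \eqref{eq:surface-normal-ode}. This requires checking that, with $JH=-2\partial_x$, the mean curvature terms collapse to $e^{4u}/2$. If the constants do not match directly, I would fix the normalisation of $\omega_1,\omega_2$ (the factor $2$) to force the normalisation in \eqref{eq:surface-HCU}, and absorb any remaining scaling into $\mu$.
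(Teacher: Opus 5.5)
Your proposal is correct and is essentially the paper's proof. In any oriented isothermal chart $\zeta$ your forms satisfy $\omega_1+i\omega_2=d\zeta/g_0$ with $g_0=-\tfrac12(P+iQ)$, so you obtain exactly the paper's coordinate, only without first choosing a boundary isothermal chart; the remaining steps ($u_y=0$, $|f|^2=e^{8u}+2e^{4u}u_{xx}$ depending only on $x$, and $\log|f|$ affine) coincide with the paper's.

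The Gauss-equation bookkeeping you flagged closes exactly. From $2K=|H|^2-|h|^2$ with $|H|^2=4e^{2u}$, $|\tfrac14\alpha_H\odot g|^2=\tfrac34|H|^2$ and $|\mathring A|^2=e^{-6u}|f|^2$, one gets $K=\tfrac12(e^{2u}-|f|^2e^{-6u})$. So no renormalisation is needed, and none would be allowed, since \eqref{eq:surface-HCU} fixes the scale of $z$.
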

\begin{proof}
Choose a smooth boundary isothermal coordinate $\zeta=s+it$ on a
small upper half-disk, with $\zeta(p)=0$ and $g=e^{2v}|d\zeta|^2$.
Write $V=JH=P\partial_s+Q\partial_t$ and set
\[
        \chi=-\frac12(P+iQ)\partial_\zeta
      =g_0(\zeta)\partial_\zeta.
\]
Since $V$ is conformal, $P+iQ$ is holomorphic in the interior. Thus $\chi$ is holomorphic there and smooth up to the diameter, and
\[
\chi+\bar\chi=-\frac12 V.
\] 
Since $H(p)\neq0$, we may shrink the half-disk so that $g_0$ has no zero on its closure. 

Define
\[
        z(\zeta)=\int_0^1
        \frac{\zeta}{g_0(\tau\zeta)}\,d\tau.
\]
The function $z=x+iy$ is smooth up to the diameter and satisfies
\[
z_\zeta=g_0^{-1}, \qquad z_{\bar \zeta}=0
\]
in the interior. These identities extend to the diameter by continuity, so the real Jacobian
is $|g_0|^{-2}>0$ there. 
After extending $z$ smoothly as a real map across the diameter,
the inverse function theorem gives a
one-sided coordinate on a smaller half-disk. 
In this coordinate,
$\chi=\partial_z$, and therefore
\[
JH=-2(\partial_z+\partial_{\bar z})=-2\partial_x.
\]

Write $g=e^{2u}|dz|^2$. Since the Maslov form  is closed and
$\alpha_H=-2e^{2u}dx$, we obtain $u_y=0$ in the interior. Let
$F=4A(\partial_z,\partial_z,\partial_z)$. Since $\Phi$ is holomorphic, so is $F$.

For $e_1=e^{-u}\partial_x$, $e_2=e^{-u}\partial_y$ and
$A_{ijk}=A(e_i,e_j,e_k)$, the trace relation
\eqref{eq:trace-A} and the definition $F$ give
\[
A_{111}+A_{122}=2e^u,\qquad A_{112}+A_{222}=0,
\qquad
2e^{-3u}F=A_{111}-3A_{122}+i(A_{222}-3A_{112}).
\]
The Gauss equation therefore yields
\[
\begin{aligned}
K&=A_{111}A_{122}+A_{112}A_{222}
                 -A_{112}^2-A_{122}^2\\
 &=\tfrac12\bigl(e^{2u}-|F|^2e^{-6u}\bigr).
\end{aligned}
\]
On the other hand, $K=-e^{-2u}u_{xx}$. Consequently,
\begin{equation}\label{eq:surface-Gauss-Codazzi}
u_{xx}+\tfrac12\bigl(e^{4u}-|F|^2e^{-4u}\bigr)=0.
\end{equation}

Differentiating this equation in $y$ gives
$\partial_y|F|^2=0$ at every interior point. If $F$ vanishes at
an interior point, it vanishes on a short vertical line segment
through that point. The holomorphic identity theorem then gives
$F\equiv0$ on the connected neighborhood. In this case, take
$\mu=0$ and $\lambda=\alpha=0$.

Otherwise, $F$ has no interior zeros. Set $q=\log|F|$.
Since $q$ is harmonic and $q_y=0$, we have
\[
        q_{xx}=q_{xy}=0.
\]
Thus $q_x=\lambda$ is constant on the connected neighborhood. The function $q-\lambda x$ also has zero differential, so
\(
        q=\lambda x+\log\mu
\)
for some $\mu>0$.
The holomorphic function $Fe^{-\lambda z}$ has constant modulus $\mu$ and
is therefore a constant $\mu e^{i\alpha}$.
Substitution in \eqref{eq:surface-Gauss-Codazzi} proves
\eqref{eq:surface-normal-cubic} and \eqref{eq:surface-normal-ode}.
All identities extend to the boundary by smoothness.
\end{proof}

\begin{remark}\label{rem:surface-zero-cubic}
If $\mu=0$ in one normal-form chart, then
$\Phi$ vanishes on an interior open set. Since $M^\circ$ is connected, the holomorphic identity theorem gives $\Phi=0$ on all of $M^\circ$. 
As $g(\partial_z,\partial_z)=0$, the $(3,0)$-part of $\mathring A$ equals that of $A$.  A real trace-free
symmetric cubic tensor on a surface is determined by its $(3,0)$-part.
Hence by smoothness,
\begin{equation}\label{eq:surface-tracefree-zero-branch}
        \mathring A=0 \qquad\text{on }M.
\end{equation}
On the boundary, equations \eqref{eq:tracefree-A},
\eqref{eq:surface-boundary-identities}, and
\eqref{eq:surface-W-scalar} give
\begin{equation*}
 0=\mathring A(T,\nu,\nu)
  =A(T,\nu,\nu)+\frac14
       (\alpha_H\mathbin{\odot}g)(T,\nu,\nu)
  =\frac14w.
\end{equation*}
Thus $W=0$ on $\partial M$ in this case.  In the remaining local boundary analysis, we may assume $\mu>0$.
\end{remark}

\subsection{The local boundary system}

Work near a boundary point where $H\neq0$ in the coordinates of Lemma
\ref{lem:surface-normal-form}.  Set
\begin{equation*}
        e_1=e^{-u}\partial_x,
        \qquad
        e_2=e^{-u}\partial_y,
\end{equation*}
and write $A_{ijk}=A(e_i,e_j,e_k)$.  Since
$JH=-2\partial_x=-2e^ue_1$, the trace identity
\eqref{eq:trace-A} gives
\begin{equation}\label{eq:surface-trace-components}
        A_{111}+A_{122}=2e^u,
        \qquad
        A_{112}+A_{222}=0.
\end{equation}
Equation \eqref{eq:surface-normal-cubic} gives
\begin{equation}\label{eq:surface-cubic-components}
 \begin{split}
 A_{111}-3A_{122}
   &=2\mu e^{\lambda x-3u}\cos(\alpha+\lambda y),\\
 A_{222}-3A_{112}
   &=2\mu e^{\lambda x-3u}\sin(\alpha+\lambda y).
 \end{split}
\end{equation}
Solving \eqref{eq:surface-trace-components} and
\eqref{eq:surface-cubic-components}, we obtain
\begin{equation}\label{eq:surface-A-components}
 \begin{aligned}
 A_{111}&=\frac32e^u
          +\frac\mu2e^{\lambda x-3u}\cos(\alpha+\lambda y),\\
 A_{122}&=\frac12e^u
          -\frac\mu2e^{\lambda x-3u}\cos(\alpha+\lambda y),\\
 A_{112}&=-\frac\mu2e^{\lambda x-3u}\sin(\alpha+\lambda y),\\
 A_{222}&=\frac\mu2e^{\lambda x-3u}\sin(\alpha+\lambda y).
 \end{aligned}
\end{equation}

Along the boundary, write
\begin{equation}\label{eq:surface-beta-frame}
 T=\cos\beta\,e_1+\sin\beta\,e_2,
 \qquad
 \nu=-\sin\beta\,e_1+\cos\beta\,e_2.
\end{equation}
Using \eqref{eq:surface-beta-frame} and the components in \eqref{eq:surface-A-components}, we obtain
\begin{equation}\label{eq:surface-ATnn}
 A(T,\nu,\nu)
 =\frac{e^u}{2}\cos\beta
  -\frac\mu2e^{\lambda x-3u}
       \cos(\alpha+\lambda y+3\beta),
\end{equation}
and
\begin{equation}\label{eq:surface-ATTn}
 A(T,T,\nu)
 =-\frac{e^u}{2}\sin\beta
  -\frac\mu2e^{\lambda x-3u}
       \sin(\alpha+\lambda y+3\beta).
\end{equation}
Using \eqref{eq:surface-boundary-identities}, these identities become
\begin{align}
 \mu e^{\lambda x-3u}\cos(\alpha+\lambda y+3\beta)
   &=e^u\cos\beta,
   \label{eq:surface-phase-cos}\\
 \mu e^{\lambda x-3u}\sin(\alpha+\lambda y+3\beta)
   &=-e^u\sin\beta-2\cos\theta.
   \label{eq:surface-phase-sin}
\end{align}
Define the positive function
\begin{equation}\label{eq:surface-r}
        r=\mu^2e^{2\lambda x-8u}>0.
\end{equation}
Then \eqref{eq:surface-normal-ode} becomes
\begin{equation}\label{eq:surface-u-ode}
        u_{xx}+\frac12e^{4u}(1-r)=0.
\end{equation}

If $x$ is non-constant along a boundary arc, choose a smaller arc
$\Gamma$ on which
\begin{equation*}
        T(x)=e^{-u}\cos\beta\neq0.
\end{equation*}
Then $x$ parametrizes $\Gamma$ over an open interval, and
the boundary is a graph $y=\gamma(x)$.  Choose a one-sided coordinate neighborhood over $I$ with connected vertical fibers.  Since $u_y=0$, the function $u$ depends only on $x$, including at the boundary by smoothness. For each $x_0\in I$, equation
\eqref{eq:surface-u-ode} may be evaluated at interior points with
$x=x_0$.  It therefore gives the ordinary differential equation
\begin{equation}\label{eq:surface-u-ordinary}
        u''+\frac12e^{4u}(1-r)=0
        \qquad\text{on }I.
\end{equation}
All boundary identities may now be regarded as identities in $x\in I$.
Primes will denote differentiation with respect to $x$.

\subsection{Boundary rigidity}

We prove that the coordinate $x$ in
Lemma \ref{lem:surface-normal-form} is locally constant on the
boundary wherever $H\neq0$. Otherwise, choose a boundary arc $\Gamma$ as above with
$T(x)\neq0$.
Dots denote differentiation along $T$. 
On $I=x(\Gamma)$, we have $\dot x=e^{-u}\cos\beta\neq0$ and
$y'=\tan\beta$. Combining
\eqref{eq:surface-phase-cos}--\eqref{eq:surface-phase-sin} into one complex
equation gives
\begin{equation}\label{eq:surface-complex-boundary}
\mu e^{\lambda x-3u}e^{i(\alpha+\lambda y+3\beta)}
=e^u e^{-i\beta}-2i\cos\theta.
\end{equation}
Taking absolute values and dividing by $e^{2u}$ yields
\begin{equation}\label{eq:surface-r-boundary}
r=1+4\cos\theta e^{-u}\sin\beta
  +4\cos^2\theta\,e^{-2u}.
\end{equation}

The Levi--Civita connection of
$g=e^{2u(x)}(dx^2+dy^2)$ in the frame
$e_1=e^{-u}\partial_x$, $e_2=e^{-u}\partial_y$ is
\begin{equation}\label{eq:surface-connection}
\begin{aligned}
\nabla_{e_1}e_1&=0,
&\nabla_{e_1}e_2&=0,\\
\nabla_{e_2}e_1&=e^{-u}u'e_2,
&\nabla_{e_2}e_2&=-e^{-u}u'e_1.
\end{aligned}
\end{equation}
Hence
\begin{equation}\label{eq:surface-kg-beta}
k_g=\dot\beta+e^{-u}u'\sin\beta,
\end{equation}
and, since $k_g=-\sin\theta$,
\begin{equation}\label{eq:surface-kg-x}
\cos\beta\,\beta'+u'\sin\beta=-\sin\theta e^u.
\end{equation}

The definition \eqref{eq:surface-r} gives
\begin{equation}\label{eq:surface-basic-two}
\frac{r'}r=2\lambda-8u'.
\end{equation}
Since $\mu>0$, both sides of
\eqref{eq:surface-complex-boundary} are nonzero. Taking its logarithmic
derivative gives
\begin{equation*}
\lambda-3u'+i(\lambda\tan\beta+3\beta')
=
\frac{e^ue^{-i\beta}(u'-i\beta')}
     {e^ue^{-i\beta}-2i\cos\theta}.
\end{equation*}
Using \eqref{eq:surface-r-boundary} and \eqref{eq:surface-kg-x},
the real part becomes
\begin{equation}\label{eq:surface-basic-boundary}
(1+3r)u'-\lambda r-2\sin\theta\cos\theta=0.
\end{equation}
The imaginary part is
\begin{equation*}
\lambda\tan\beta+3\beta'
=
\frac{-\beta'
+2\cos\theta e^{-u}(u'\cos\beta-\beta'\sin\beta)}
{r}.
\end{equation*}
Multiplying by $\cos\beta$, we obtain
\[
r\bigl(\lambda\sin\beta+3\beta'\cos\beta\bigr)
=
-\beta'\cos\beta
+2\cos\theta e^{-u}
 \bigl(u'\cos^2\beta-\beta'\sin\beta\cos\beta\bigr).
\]
By \eqref{eq:surface-kg-x},
$\beta'\cos\beta=-\sin\theta e^u-u'\sin\beta$. Substituting this gives
\[
r(\lambda-3u')\sin\beta-3r\sin\theta e^u
=
\sin\theta e^u+u'\sin\beta
+2\cos\theta e^{-u}u'
+2\sin\theta\cos\theta\sin\beta.
\]
Next, \eqref{eq:surface-basic-boundary} gives
$r(\lambda-3u')=u'-2\sin\theta\cos\theta$.
Substituting this and canceling $u'\sin\beta$ on both sides, we obtain
\[
\sin\theta e^u(1+3r)
+4\sin\theta\cos\theta\sin\beta
+2\cos\theta e^{-u}u'
=0.
\]
Finally, \eqref{eq:surface-r-boundary} gives
$4\cos\theta\sin\beta=e^u(r-1)-4\cos^2\theta\,e^{-u}$.
Hence
\[
4\sin\theta r e^u
+2\cos\theta e^{-u}
 (u'-2\sin\theta\cos\theta)
=0.
\]
Dividing by $2e^u>0$, we arrive at
\begin{equation}\label{eq:surface-phase-derivative}
2\sin\theta r
+\cos\theta e^{-2u}
 (u'-2\sin\theta\cos\theta)
=0.
\end{equation}
No division by $\sin\theta$ or $\cos\theta$ has been used in these
identities.

\subsubsection{The angles $0<\theta<\pi$}

Assume $0<\theta<\pi$. Then $\sin\theta>0$. If $\cos\theta=0$,
\eqref{eq:surface-phase-derivative} gives $2\sin\theta r=0$,
contradicting $r>0$. Hence $\cos\theta\neq0$. Solving
\eqref{eq:surface-basic-boundary} for $u'$ gives
\[
u'=\frac{\lambda r+2\sin\theta\cos\theta}{1+3r},
\]
and hence
$u'-2\sin\theta\cos\theta
=r(\lambda-6\sin\theta\cos\theta)/(1+3r)$.
Substituting this into \eqref{eq:surface-phase-derivative} and using
$r>0$, we obtain
\[
e^{2u}(1+3r)
=
\frac{\cos\theta(6\sin\theta\cos\theta-\lambda)}
     {2\sin\theta}.
\]
The right-hand side is constant, so differentiating with respect to $x$
gives
\[
2u'(1+3r)+3r'=0.
\]
By \eqref{eq:surface-basic-two},
$r'=2r(\lambda-4u')$, while
\eqref{eq:surface-basic-boundary} gives
$\lambda r=(1+3r)u'-2\sin\theta\cos\theta$. Therefore
\[
\begin{aligned}
0
&=2u'(1+3r)+6r(\lambda-4u')\\
&=2u'(1+3r)
  +6\bigl((1+3r)u'-2\sin\theta\cos\theta\bigr)
  -24ru'\\
&=8u'-12\sin\theta\cos\theta.
\end{aligned}
\]
Thus $u'=3\sin\theta\cos\theta/2$. 
\eqref{eq:surface-u-ordinary} then gives $r=1$. The preceding constant
identity forces $u$ to be constant, contradicting
$u'=3\sin\theta\cos\theta/2\neq0$. Consequently,
\begin{equation}\label{eq:surface-x-constant-generic}
x\text{ is locally constant on }\partial M
\quad\text{if }0<\theta<\pi.
\end{equation}

\subsubsection{The endpoint angles}

Assume $\sin\theta=0$, i.e. $\theta\in\{0,\pi\}$. Equation
\eqref{eq:surface-phase-derivative} gives $u'=0$ on $I$. Hence
$u''=0$, and therefore
\[
K=-e^{-2u}u''=0
\]
on $\Gamma$. At $\sin\theta=0$, equations
\eqref{eq:surface-boundary-identities} give
$A(T,\nu,\nu)=0$ and $A(T,T,\nu)=\cos\theta$. The Gauss equation and
the $J\nu$-component of $H$ give
\[
K=\cos\theta\,A(\nu,\nu,\nu)-1,
\qquad
-a=\cos\theta+A(\nu,\nu,\nu)
   =\cos\theta\,(K+2).
\]
Thus
\[
a=-\cos\theta\,(K+2)
\]
on $\partial M$. Equation \eqref{eq:Wa-equations} with $\sin\theta=0$ gives
$T(a)=0$. Hence $K$ is constant on the connected boundary, and since
$K=0$ on $\Gamma$,
\[
K=0\qquad\text{on }\partial M.
\]
Choose $T$ globally with the boundary orientation. For $n=2$, equation
\eqref{eq:gradient-sigma} applied to $V$ gives
\[
T(\rho)=-K\langle V,T\rangle=-Kw=0.
\]
Thus $\rho$ is constant on $\partial M$. The first equation in
\eqref{eq:Wa-equations} gives $T(w)=\rho$. Since $w$ is periodic on the
closed boundary, necessarily $\rho=0$, and hence $w$ is constant.
Finally, $\alpha_H$ is closed by \eqref{eq:closed-conformal}. Stokes'
theorem gives
\[
0=\int_M d\alpha_H
=\int_{\partial M}\alpha_H
=w\,\operatorname{Length}(\partial M),
\]
so $w=0$ on $\partial M$. But on $\Gamma$,
\[
w=\langle -2\partial_x,T\rangle
=-2e^u\cos\beta,
\]
while $\dot x=e^{-u}\cos\beta\neq0$, a contradiction. Therefore
\begin{equation}\label{eq:surface-x-constant-endpoint}
x\text{ is locally constant on }\partial M
\quad\text{if }\theta\in\{0,\pi\}.
\end{equation}

\subsection{Boundary vanishing in dimension two}

We now extract the conclusion in the surface case that is needed in the
proof of Theorem \ref{thm:main}.

\begin{proposition}\label{prop:surface-W-zero}
Under the assumptions of Theorem \ref{thm:main}, if $n=2$, then
\begin{equation}\label{eq:surface-W-zero}
W=0\qquad\text{on }\partial M.
\end{equation}
\end{proposition}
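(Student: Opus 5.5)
The plan is to combine the local boundary rigidity obtained in \eqref{eq:surface-x-constant-generic} and \eqref{eq:surface-x-constant-endpoint} with Remark~\ref{rem:surface-zero-cubic}. By that remark, if $\mu=0$ in some normal-form chart, then $\mathring A\equiv0$ and $W=0$ on $\partial M$, so we may assume $\mu>0$ in every chart. The set $\partial M\cap\{H=0\}$ already has $W=0$. It therefore remains to show $w=0$ at boundary points where $H\neq0$.

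Let $p\in\partial M$ with $H(p)\neq0$, and take the one-sided Castro--Urbano coordinate of Lemma~\ref{lem:surface-normal-form} on a relative neighborhood $U$ of $p$. In this coordinate \eqref{eq:surface-HCU} gives $JH=-2\partial_x$. Hence along $U\cap\partial M$,
\[
w=\langle JH,T\rangle=-2\langle\partial_x,T\rangle=-2e^{2u}\,dx(T)=-2e^{2u}\,T(x).
\]
By \eqref{eq:surface-x-constant-generic} for $0<\theta<\pi$, and \eqref{eq:surface-x-constant-endpoint} for $\theta\in\{0,\pi\}$, the function $x$ is locally constant on $U\cap\partial M$. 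So $T(x)=0$ there, and therefore $w=0$ on $U\cap\partial M$. In particular $W(p)=0$.

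One point needs care. The arguments behind \eqref{eq:surface-x-constant-generic}--\eqref{eq:surface-x-constant-endpoint} proceed by contradiction on a subarc where $T(x)\neq0$. So they show that the open set $\{T(x)\neq0\}\cap U\cap\partial M$ is empty, which is exactly what is used above.

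The endpoint case also invoked global information: connectedness of $\partial M\cong\Sph^1$, Stokes' theorem and periodicity of $w$. Its contradiction only requires the existence of one arc $\Gamma$ with $T(x)\neq0$, so it applies at every such $p$. Since the case analysis is exhaustive, $W=0$ on all of $\partial M$.

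The main (mild) obstacle is bookkeeping rather than analysis. One must check that the $\mu>0$ reduction is global, which holds because $\mu=0$ in one chart forces $\Phi\equiv0$ everywhere. One must also check that points with $H=0$ are covered trivially, since there $V=0$ and hence $W=0$.
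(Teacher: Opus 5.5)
Your proposal is correct and follows the paper's own proof essentially step for step. You dispose of the $\Phi\equiv0$ branch (equivalently, $\mu=0$ in some chart) via Remark~\ref{rem:surface-zero-cubic}, and treat boundary points with $H(p)=0$ trivially. At points with $H(p)\neq0$ you combine the local constancy of $x$ from \eqref{eq:surface-x-constant-generic}--\eqref{eq:surface-x-constant-endpoint} with $JH=-2\partial_x$ to get $w=-2e^{2u}T(x)=0$.
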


\begin{proof}
If $\Phi\equiv0$, Remark \ref{rem:surface-zero-cubic} gives the result.
Assume that $\Phi\not\equiv0$. Then every normal-form chart centered at a
point where $H\neq0$ has $\mu>0$. Let $p\in\partial M$. If $H(p)=0$,
then $V(p)=JH(p)=0$, so $W(p)=0$. Suppose that $H(p)\neq0$ and use the
coordinate in Lemma \ref{lem:surface-normal-form}. According to the value
of the contact angle, equations \eqref{eq:surface-x-constant-generic}
and \eqref{eq:surface-x-constant-endpoint} show that $x$ is locally
constant along the boundary near $p$. Hence $T(x)=0$. Since
$g=e^{2u}|dz|^2$, the vector $\partial_x$ is orthogonal to $T$ on the
boundary. Equation \eqref{eq:surface-HCU} gives
\begin{equation*}
w=\langle JH,T\rangle
=-2\langle\partial_x,T\rangle
=0.
\end{equation*}
Thus $W(p)=0$. Since $p$ was arbitrary, $W=0$ on all of $\partial M$.
\end{proof}

\section{Boundary rigidity in higher dimensions}

Throughout this section, we assume the hypotheses of Theorem~\ref{thm:main} with $n\geq 3$.
We now prove that $JH$ is normal to the boundary.
Put $\Sigma=\partial M$ and
\begin{equation*}
        m=\dim\Sigma=n-1.
\end{equation*}

\subsection{The trace-free boundary cubic tensor}

Let
\begin{equation}\label{eq:boundary-C}
        C=A|_{T\Sigma^3}.
\end{equation}
If $\{e_\alpha\}_{\alpha=1}^m$ is an orthonormal frame on $\Sigma$,
then \eqref{eq:trace-A} and Lemma \ref{lem:boundary-identities} give
\begin{equation}\label{eq:trace-C}
 \sum_{\alpha=1}^mC(e_\alpha,e_\alpha,U)
 =-\langle W,U\rangle.
\end{equation}
Define
\begin{equation}\label{eq:C-circle}
        \mathring C
        =C+\frac{1}{m+2}(W^\flat\mathbin{\odot}g_\Sigma).
\end{equation}
Equation \eqref{eq:trace-C} shows that $\mathring C$ is trace-free.

We next check its Codazzi property.  For tangent vectors $Z$ and $Q$ on $\Sigma$,
Lemma \ref{lem:boundary-identities} gives
$A(\nu,Z,Q)=\cos\theta\,g(Z,Q)$.  Hence
\eqref{eq:boundary-second-fundamental} gives
\begin{equation}\label{eq:boundary-Codazzi}
 \begin{split}
 (\nabla_XA)(U,Z,Q)
& =(\nabla_X^\Sigma C)(U,Z,Q)
\\& +\sin\theta\cos\theta\bigl\{g(X,U)g(Z,Q)+g(X,Z)g(U,Q)
+g(X,Q)g(U,Z)\bigr\}.
 \end{split}
\end{equation}
The left-hand side and the correction term are symmetric in all four
vectors.  Hence $\nabla^\Sigma C$ is symmetric.  Equation
\eqref{eq:W-conformal} shows that the derivative of
$W^\flat\mathbin{\odot}g_\Sigma$ is also symmetric.  Therefore
\begin{equation}\label{eq:C-circle-Codazzi}
        \nabla^\Sigma\mathring C
        \quad\text{is symmetric}.
\end{equation}

\begin{proposition}\label{prop:boundary-cubic}
If $W$ is not identically zero, then
\begin{equation}\label{eq:C-circle-zero}
        \mathring C=0
\end{equation}
on $\Sigma$.
\end{proposition}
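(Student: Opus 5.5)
The plan is to run the interior Ros--Urbano strategy intrinsically on the closed boundary $\Sigma$, which is a compact manifold without boundary of dimension $m=n-1\geq2$. By \eqref{eq:W-conformal}, $W$ is a closed conformal field on $\Sigma$. By \eqref{eq:C-circle-Codazzi}, $\mathring C$ is a symmetric trace-free cubic tensor with symmetric covariant derivative. Lemma \ref{lem:harmonic-contraction}, applied on $\Sigma$ with $X=W$, shows that
\[
\eta=\mathring C(W,W,\cdot)
\]
is a closed and coclosed one-form on $\Sigma$. Since $M$ is a ball, $\Sigma\cong\Sph^{n-1}$ with $n-1\geq2$, so $H^1(\Sigma)=0$. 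Hodge theory on the closed manifold $\Sigma$ therefore gives $\eta=0$, that is, $\mathring C(W,W,\cdot)=0$.

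Next I would show that the leaves of $W^\perp$ are round, so that Lemma \ref{lem:leaf-reduction} applies. Since $W\not\equiv0$ on the connected manifold $\Sigma$, Lemma \ref{lem:zero-set} says that its zeros are isolated and nondegenerate, with $\nabla^\Sigma W=\sigma\,\mathrm{Id}$ and $\sigma\neq0$ there. Hence each zero has index $(\operatorname{sgn}\sigma)^m=\pm1$. By Poincaré--Hopf, $\chi(\Sph^m)$ equals the sum of these indices, and the sum is finite by compactness. To guarantee that a zero exists at all, I would argue as follows. If $W$ had no zeros, then $E=W/|W|$ would be a unit field with geodesic integral curves and the structure \eqref{eq:local-warped}; $W^\flat$ is closed, hence exact since $H^1(\Sigma)=0$, and a primitive of $W^\flat$ attains its maximum on the compact $\Sigma$, where $W$ must vanish. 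So $W$ has zeros.

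The zero set $Z(W)$ is finite and $m\geq2$, so $\Sigma\setminus Z(W)$ is connected. The regular leaves are level sets of the primitive $f$ of $W^\flat$, because $W^\perp=\ker df$. Their components are closed subsets of the compact $\Sigma$, hence compact. Lemma \ref{lem:round-leaves} then gives that every regular leaf is a round $(m-1)$-sphere. Lemma \ref{lem:leaf-reduction} applies on $U=\Sigma\setminus Z(W)$ with $S=\mathring C$: for $m\geq3$ this uses the round leaves, and for $m=2$ only the frame argument is needed. It yields $\mathring C=0$ on $U$, and $\mathring C=0$ on all of $\Sigma$ by density and continuity.

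I expect the main obstacle to be hypothesis checking rather than any computation. The points to verify are that the leaves of dimension $m-1$ are compact and genuinely are level components of a global function, and the degenerate dimension count. When $n=3$, $m=2$, and the leaves are circles. Lemma \ref{lem:leaf-reduction} handles $n=2$ (its dimension here being $m$) by the frame argument, so roundness is not even needed in that case. I also need Lemma \ref{lem:round-leaves} to be stated for dimension $\geq2$, which holds since $m\geq2$.
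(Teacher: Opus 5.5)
Your route is the paper's: harmonicity of $\mathring C(W,W,\cdot)$ together with $H^1(\Sigma;\R)=0$; a zero of $W$ at an extremum of a primitive $f$ of $W^\flat$; finiteness of $Z(W)$ and connectedness of $\Sigma\setminus Z(W)$; compactness of the regular leaves; and then Lemma~\ref{lem:round-leaves} and Lemma~\ref{lem:leaf-reduction}. The Poincar\'e--Hopf remark is unnecessary, and for $m$ odd it would give nothing anyway, so the extremum argument is the one doing the work.

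The one step not justified as written is compactness of the regular leaves, which you yourself flagged as the main point to check. A regular leaf at level $c$ is a connected component of $f^{-1}(c)\setminus Z(W)$, not of $f^{-1}(c)$. Such a component need not be closed in $\Sigma$. For a general gradient field whose level $f^{-1}(c)$ passes through a saddle zero, the regular pieces of that level accumulate at the saddle and are noncompact. So ``components are closed subsets of the compact $\Sigma$'' does not by itself give the hypothesis of Lemma~\ref{lem:round-leaves}.

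The missing observation uses an ingredient you already stated. At each zero $p$ one has $\nabla^2 f(p)=\sigma(p)g_\Sigma$ with $\sigma(p)\neq0$ by Lemma~\ref{lem:zero-set}. Hence $p$ is a strict local maximum or minimum of $f$, and so an isolated point of $f^{-1}(f(p))$. It follows that $f^{-1}(c)\setminus Z(W)$ is closed in $\Sigma$ and no regular leaf accumulates at $Z(W)$. A leaf is then closed in $\Sigma$, since it is a component of this closed set, and is therefore compact. This is exactly how the paper argues; with it inserted, the rest of your proof goes through.
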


\begin{proof}
By Lemma \ref{lem:harmonic-contraction}, the one-form
\begin{equation*}
        \eta_\Sigma=\mathring C(W,W,\cdot)
\end{equation*}
is closed and coclosed.  Since $m\geq2$ and
$\Sigma\simeq\Sph^m$, there is a function $f_1$ such that
$\eta_\Sigma=df_1$.  The equation $\delta\eta_\Sigma=0$ says that $f_1$ is harmonic.
Since $\Sigma$ is closed, $f_1$ is constant.  Hence
\begin{equation}\label{eq:C-two-W}
        \mathring C(W,W,\cdot)=0.
\end{equation}

The one-form $W^\flat$ is closed by \eqref{eq:W-conformal}. Since
$H^1(\Sigma;\R)=0$, we have $W=\nabla^\Sigma f_2$ for a function $f_2$.
If $W$ is nontrivial, $f_2$ has a maximum and a minimum, so $W$ has a
zero. Lemma \ref{lem:zero-set} shows that the zero set is finite.
Removing finitely many points from $\Sph^m$, $m\geq2$, leaves a
connected set.

We claim that every regular leaf of $W^\perp$ is compact. 
Let $L$ be such a leaf, with $f_2=c$ on $L$.
Then $L$ is a connected component of
$f_2^{-1}(c)\setminus Z(W)$.
If $p\in Z(W)$, then \eqref{eq:W-conformal} gives
\[
        \nabla^2 f_2(p)=\sigma(p)g_\Sigma,
\]
and Lemma \ref{lem:zero-set} gives $\sigma(p)\neq0$. Thus $p$ is a
strict local maximum or minimum of $f_2$. Hence a regular level leaf
cannot accumulate at a zero of $W$. Since $\Sigma$ is compact, the
closure of a regular leaf is compact and contains only regular points;
as a connected component of the corresponding regular level set, the
leaf is closed in that level set. Therefore the leaf itself is compact.

Lemma \ref{lem:round-leaves}, applied on the closed manifold $\Sigma$,
now shows that all regular leaves of $W^\perp$ are round spheres.
Applying Lemma \ref{lem:leaf-reduction} on $U=\Sigma\setminus Z(W)$, with $X=W$ and $S=\mathring C$, and using \eqref{eq:C-two-W}, we obtain
$\mathring C=0$ on $U$. The result follows everywhere by
continuity.
\end{proof}

\subsection{Two boundary Ricci identities}

We now prove the main boundary result.

\begin{proposition}\label{prop:W-zero}
Under the assumptions of Theorem \ref{thm:main}, if $n\geq3$, then
\begin{equation}\label{eq:W-zero}
        W=0\qquad\text{on }\Sigma.
\end{equation}
\end{proposition}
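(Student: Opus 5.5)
The plan is to argue by contradiction. Assume $W\not\equiv0$ on $\Sigma$. Proposition~\ref{prop:boundary-cubic} then gives $\mathring C=0$, so the boundary cubic tensor is completely explicit:
\[
C=-\tfrac{1}{m+2}\,W^\flat\mathbin{\odot}g_\Sigma .
\]
Lemma~\ref{lem:boundary-identities} also fixes the mixed components: $A(U,Z,\nu)=\cos\theta\,g(U,Z)$ and $A(U,\nu,\nu)=0$. The trace identity \eqref{eq:trace-A} evaluated at $\nu$ gives the last component,
\[
A(\nu,\nu,\nu)=-a-m\cos\theta .
\]
So along $\Sigma$ every component of $A$ is expressed through $W$, $a$ and $\theta$. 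I will derive two independent formulas for the mixed Ricci curvature $\Ric_\Sigma(W,U)$, $U\in T\Sigma$, and compare them.

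\emph{First identity (intrinsic).} By \eqref{eq:W-conformal}, $W$ is closed conformal on $\Sigma$ with $\sigma=\rho-a\sin\theta$. Formula \eqref{eq:gradient-sigma} on $\Sigma$ gives
\[
\Ric_\Sigma(W,U)=-(m-1)U(\sigma).
\]
The same formula on $M$ for $V$ gives $U(\rho)=-\frac1{n-1}\Ric_M(V,U)$. By \eqref{eq:Wa-equations}, $U(a)=\sin\theta\langle W,U\rangle$. Hence
\[
\Ric_\Sigma(W,U)=\tfrac{m-1}{m}\Ric_M(W+a\nu,U)+(m-1)\sin^2\theta\,\langle W,U\rangle .
\]

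\emph{Second identity (extrinsic).} $\Sigma$ is totally umbilic in $M$ with second fundamental form $-\sin\theta\,g$ by \eqref{eq:boundary-second-fundamental}. The Gauss equation for $\Sigma\subset M$ gives
\[
\Ric_\Sigma(W,U)=\Ric_M(W,U)-R_M(W,\nu,\nu,U)+(m-1)\sin^2\theta\,\langle W,U\rangle .
\]
The Gauss equation of the Lagrangian immersion,
\[
R(X,Y,Z,Q)=\textstyle\sum_e\bigl[A(X,Q,e)A(Y,Z,e)-A(X,Z,e)A(Y,Q,e)\bigr],
\]
lets me compute $\Ric_M(W,U)$, $\Ric_M(\nu,U)$ and $R_M(W,\nu,\nu,U)$ from the explicit components above. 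The Codazzi equation is contained in \eqref{eq:boundary-Codazzi} and is already used. I expect $\Ric_M(\nu,U)$ to reduce to a multiple of $\langle W,U\rangle$ involving $\cos\theta$ and $a$. Each side then becomes a scalar function times $\langle W,U\rangle$, plus possibly a $|W|^2\langle W,U\rangle$ term coming from the quadratic expression in $C$.

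Subtracting the two identities should give a relation of the form
\[
\Psi(a,|W|^2,\theta)\,\langle W,U\rangle=0\quad\text{for all }U\in T\Sigma .
\]
On the dense open set where $W\neq0$ (Lemma~\ref{lem:zero-set}) this forces $\Psi=0$. I would then differentiate along $W$, using $W(a)=\sin\theta|W|^2$ and $W(|W|^2)=2\sigma|W|^2$. Alternatively, I would evaluate at a zero of $W$, which exists because $W=\nabla^\Sigma f_2$ on the sphere, and use $\sigma(p)\neq0$ there. In either case the aim is a contradiction, splitting the cases $\sin\theta=0$ and $\sin\theta\neq0$ as in the surface section. The contradiction shows $W\equiv0$ on $\Sigma$.

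The main obstacle is this last step: the Gauss-equation bookkeeping must produce a coefficient $\Psi$ that is genuinely nonvanishing, or at least rigid enough to contradict the conformal-field ODE. The dimensional factors $\frac{m-1}{m}$ versus $1$, together with the $\frac1{m+2}$ coefficients in $C$, are what should make the two Ricci identities incompatible. If the coefficient of $|W|^2$ turns out to vanish, I would fall back to combining the resulting constraint on $a$ and $\rho$ with the closed-form zero structure of $W$ on $\Sph^m$.
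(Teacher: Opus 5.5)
Your plan is the paper's argument in substance. You assume $W\not\equiv0$, use $\mathring C=0$, and compare an intrinsic expression for $\Ric_\Sigma(W,\cdot)$ from \eqref{eq:W-conformal} with an extrinsic one from a Gauss equation. The differences are bookkeeping. You get $U(\rho)$ from the traced identity $\Ric_M(V,U)=-(n-1)U(\rho)$, where the paper uses the single component $U(\rho)=\langle R(U,\nu)V,\nu\rangle$. You compute the extrinsic side through $\Sigma\subset M$ plus the Lagrangian Gauss equation, where the paper uses $\Sigma\subset\Sph^{2n-1}$.

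The computation you flag as the main obstacle works out. One finds $\Ric_M(\nu,U)=0$, which uses only the trace of $C$, and $R_M(W,\nu,\nu,U)=-\cos\theta\,(a+n\cos\theta)\langle W,U\rangle$. With $\mathring C=0$,
\[
\Ric_M(W,U)=\Bigl[\tfrac{2(m-1)}{(m+2)^2}|W|^2-a\cos\theta-2\cos^2\theta\Bigr]\langle W,U\rangle .
\]
The $(m-1)\sin^2\theta$ terms cancel, and your comparison reduces to $\Ric_M(W,U)=m\,R_M(W,\nu,\nu,U)$. So $\Psi$ is a nonzero multiple of $\cos\theta[a+(n+1)\cos\theta]+\frac{2}{(n+1)^2}|W|^2$. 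This is exactly \eqref{eq:key-boundary-algebra}, and the $|W|^2$ coefficient does not vanish, so no fallback is needed. Your version also shows that this identity just says the traced and untraced forms of \eqref{eq:curvature-X} for $V$ agree on the boundary values of $A$.

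The last step is left too vague. Evaluating $\Psi$ at a zero $p$ of $W$ gives only $a(p)=-(n+1)\cos\theta$ when $\cos\theta\neq0$, and $\sigma(p)\neq0$ does not contradict this.

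The paper closes with three cases:
\begin{itemize}
\item if $\cos\theta=0$, the identity gives $W=0$ at once;
\item if $\sin\theta\cos\theta\neq0$, then $da=\sin\theta\,W^\flat$ puts the maximum and minimum of $a$ at zeros of $W$, so $a$ is constant and $W=0$;
\item if $\sin\theta=0$, then $a$ is constant, so $|W|$ is constant and vanishes at a zero.
\end{itemize}

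Your derivative idea avoids this case split once it is completed. Differentiating $\Psi=0$ along $W$, using $W(a)=\sin\theta|W|^2$ and $W(|W|^2)=2\sigma|W|^2$, gives $\sigma=-\frac{(n+1)^2}{4}\sin\theta\cos\theta$ on the dense set $\{W\neq0\}$. By continuity $\sigma$ is this constant on all of $\Sigma$. Then $\int_\Sigma\diver_\Sigma W=m\sigma\operatorname{Vol}(\Sigma)=0$ forces $\sigma\equiv0$, contradicting $\sigma(p)\neq0$ at a zero of $W$. Alternatively, your extrinsic identity with $\Ric_\Sigma(W,\cdot)=-(m-1)\,d\sigma$ gives $\nabla^\Sigma\sigma=-\bigl(1+\frac{2}{(n+1)^2}|W|^2\bigr)W$. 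This is nonzero wherever $W\neq0$, which is incompatible with $\sigma$ being constant there.
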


\begin{proof}
Suppose that $W$ is not identically zero. 
Recall that $a=\alpha_H(\nu)$.
Taking the $J\nu$ component of $H$ and using
Lemma \ref{lem:boundary-identities}, we get
\begin{equation}\label{eq:boundary-A-nnn}
 -a=\langle H,J\nu\rangle
 =\sum_{\alpha=1}^mA(e_\alpha,e_\alpha,\nu)
   +A(\nu,\nu,\nu)
 =m\cos\theta+A(\nu,\nu,\nu).
\end{equation}

Recall from \eqref{eq:curvature-X} that
\begin{equation}\label{eq:integrability-V}
        R(U,\nu)V=U(\rho)\nu-\nu(\rho)U.
\end{equation}
Taking the inner product with $\nu$ and using the Gauss equation on
$M^n$, we obtain
\begin{equation}\label{eq:rho-first}
 \begin{split}
 U(\rho)
 & =\langle h(U,\nu),h(\nu,V)\rangle
    -\langle h(U,V),h(\nu,\nu)\rangle.
 \end{split}
\end{equation}
Lemma \ref{lem:boundary-identities} gives
\begin{equation*}
 \langle h(U,\nu),h(\nu,V)\rangle
 =\cos\theta A(\nu,V,U)
 =\cos^2\theta\langle W,U\rangle.
\end{equation*}
Since $\{Je_1,\ldots,Je_m,J\nu\}$ is an orthonormal normal frame and
$A(\nu,\nu,e_\alpha)=0$, it also gives
$h(\nu,\nu)=-(a+m\cos\theta)J\nu$ by
\eqref{eq:boundary-A-nnn}. Therefore
\begin{equation*}
 \langle h(U,V),h(\nu,\nu)\rangle
 =-(a+m\cos\theta)A(U,V,\nu)
 =-\cos\theta(a+m\cos\theta)\langle W,U\rangle.
\end{equation*}
Using the preceding identities and $n=m+1$, we obtain
\begin{equation}\label{eq:rho-gradient}
        \nabla^\Sigma\rho=\cos\theta(a+n\cos\theta)W.
\end{equation}
Equations \eqref{eq:Wa-equations} and \eqref{eq:boundary-sigma} give
\begin{equation}\label{eq:sigma-gradient}
        \nabla^\Sigma\sigma
        =\bigl[\cos\theta(a+n\cos\theta)-\sin^2\theta\bigr]W.
\end{equation}

Using \eqref{eq:W-conformal}, we have
\(
R^\Sigma(X,Y)W
=X(\sigma)Y-Y(\sigma)X,
 \) for \(X,Y\in T\Sigma.\)
Taking the trace, we obtain
\begin{equation}\label{eq:Ric-first}
 \Ric^\Sigma(W,U)=-(m-1)U(\sigma).
\end{equation}
Consequently,
\begin{equation}\label{eq:Ric-first-W}
 \Ric^\Sigma(W,W)
 =-(m-1)\bigl[\cos\theta(a+n\cos\theta)-\sin^2\theta\bigr]|W|^2.
\end{equation}

We compute the same Ricci curvature from the immersion of $\Sigma$
in the unit sphere.  If $\bar h$ is the second fundamental form of
$\Sigma\subset\mathbb S^{2n-1}$, then
$\langle\bar h(U,Z),JN\rangle
=-\langle Z,JU\rangle=0$.  Hence
\begin{equation}\label{eq:sphere-second-form}
\begin{aligned}
\bar h(U,Z)
&=\sum_{\alpha=1}^m
  \langle\bar h(U,Z),Je_\alpha\rangle Je_\alpha\\
&=\sum_{\alpha=1}^m C(U,Z,e_\alpha)Je_\alpha.
\end{aligned}
\end{equation}
By \eqref{eq:trace-C}, the trace of $\bar h$ is $-JW$.
The Gauss equation in the unit sphere gives
\begin{equation}\label{eq:Ric-C}
 \Ric^\Sigma(W,W)
 =(m-1)|W|^2-C(W,W,W)-|C(W,\cdot,\cdot)|^2.
\end{equation}

Proposition \ref{prop:boundary-cubic} gives
\begin{equation}\label{eq:C-W-formula}
 C(W,X,Y)
 =-\frac{1}{m+2}
 \bigl\{|W|^2g(X,Y)
       +2\langle W,X\rangle\langle W,Y\rangle\bigr\}.
\end{equation}
Hence
\begin{equation}\label{eq:C-W-values}
 C(W,W,W)=-\frac{3}{m+2}|W|^4,
 \qquad
 |C(W,\cdot,\cdot)|^2
 =\frac{m+8}{(m+2)^2}|W|^4.
\end{equation}
Substituting them into \eqref{eq:Ric-C} gives
\begin{equation}\label{eq:Ric-second-W}
 \Ric^\Sigma(W,W)
 =(m-1)|W|^2
 +\frac{2(m-1)}{(m+2)^2}|W|^4.
\end{equation}

Comparison of \eqref{eq:Ric-first-W} and
\eqref{eq:Ric-second-W}, together with $m+2=n+1$ and
$\sin^2\theta+\cos^2\theta=1$, gives
\begin{equation}\label{eq:Ric-comparison-factor}
 0=(m-1)|W|^2\left\{
 \cos\theta\bigl[a+(n+1)\cos\theta\bigr]
 +\frac{2}{(n+1)^2}|W|^2\right\}.
\end{equation}
Here $m\geq2$.  Thus, on the open set where $W\neq0$, we may divide
by $(m-1)|W|^2$ and obtain
\begin{equation}\label{eq:key-boundary-algebra}
 \cos\theta\bigl[a+(n+1)\cos\theta\bigr]
 +\frac{2}{(n+1)^2}|W|^2=0.
\end{equation}
The regular set is dense by Lemma \ref{lem:zero-set}, so
\eqref{eq:key-boundary-algebra} holds on all of $\Sigma$ by
continuity.

If $\cos\theta=0$, equation \eqref{eq:key-boundary-algebra} gives $W=0$, a
contradiction.  Suppose that $\sin\theta>0$ and $\cos\theta\neq0$.  By
\eqref{eq:Wa-equations},
\begin{equation*}
        da=\sin\theta W^\flat.
\end{equation*}
At both a maximum point and a minimum point of $a$, one has $W=0$.
Equation \eqref{eq:key-boundary-algebra} gives
\begin{equation*}
        a=-(n+1)\cos\theta
\end{equation*}
at both points.  Thus the maximum and minimum of $a$ agree, so $a$ is
constant and $W=0$, again a contradiction.

It remains to consider $\sin\theta=0$. Then
$\cos\theta=\pm1$.  Equation \eqref{eq:Wa-equations} shows that $a$ is constant,
and \eqref{eq:key-boundary-algebra} shows that $|W|$ is constant.
The vector field $W$ is the gradient of a function on the compact
manifold $\Sigma$ (see the proof of Proposition \ref{prop:boundary-cubic}), so it has a zero.  Therefore $|W|=0$, which is again a contradiction.
\end{proof}
\section{Proof of Theorem \ref{thm:main}}

We apply the boundary vanishing propositions and then use the
trace-free cubic tensor to classify the immersion.

Proposition \ref{prop:surface-W-zero} gives the boundary vanishing result when
$n=2$, and Proposition \ref{prop:W-zero} gives it when $n\geq3$.
Therefore, 
\begin{equation}\label{eq:V-normal-boundary}
V=JH=a\nu\qquad\text{on }\Sigma.
\end{equation}
The second equation in \eqref{eq:Wa-equations}, together with $W=0$ and the connectedness of $\Sigma$, shows that $a$ is
constant on $\Sigma$.

We separate the minimal and nonminimal cases.

\subsection{The minimal case}

If $H\equiv0$, then $\Delta|\iota|^2=2n>0$.
Since $|\iota|^2=1$ on the boundary and $|\iota|^2<1$ in the interior,
the Hopf boundary point lemma gives
\[
2\sin\theta=\partial_\nu|\iota|^2>0.
\]
Thus $\theta\in(0,\pi)$, and the immersion is self-similar with parameter zero.
The boundary is connected, so its constant contact angle has a
cosine of fixed sign. Corollary~3.8 of \cite{GaoMaYao2026} applies
and shows that $\iota$ is a diffeomorphism onto an equatorial
Lagrangian disk.

\subsection{The nonminimal case}

Assume now that $H$ is not identically zero.  Since $\alpha_H$ is
closed and $H^1(M^n;\R)=0$, there is a function $f$ such that
\begin{equation}\label{eq:V-gradient}
        V=\nabla f.
\end{equation}
Equation \eqref{eq:V-normal-boundary} shows that $f$ is constant on
$\Sigma$.  The boundary constant $a$ cannot be zero.  Indeed, if
$a=0$, then $V=0$ on $\Sigma$.  Its tangential derivative also
vanishes there, so \eqref{eq:closed-conformal} gives $\rho=0$ on
$\Sigma$.  Lemma \ref{lem:zero-set} would then give $V\equiv0$, which
contradicts the present case.

The function $f$ is nonconstant and has constant boundary value.
Therefore either its maximum or its minimum is attained in the
interior, and $V$ has an interior zero. Lemma \ref{lem:zero-set}
shows that its zeros are isolated. Since $V$ has no boundary zeros,
the zero set is finite. Removing finitely many points from the
interior of a ball does not disconnect it, so
\begin{equation}\label{eq:regular-connected}
        (M^n)^\circ\setminus Z(V)\quad\text{is connected}.
\end{equation}

Every regular leaf of $V^\perp$ in $(M^n)^\circ$ is compact. Let $L$ be such a leaf, with $f=c$ on $L$.
Then $L$ is a connected component of
$f^{-1}(c)\cap((M^n)^\circ\setminus Z(V))$.  At an interior zero $p$ of
$V$, equation \eqref{eq:closed-conformal} gives
\[
        \nabla^2f(p)=\rho(p)g,
\]
and Lemma \ref{lem:zero-set} gives $\rho(p)\neq0$. Thus $p$ is a
strict local maximum or minimum of $f$, so the leaf cannot accumulate
at $Z(V)$. It cannot accumulate at the boundary either. Indeed, $f$ is
constant on $\Sigma$ and $df(\nu)=a\neq0$ there. In an inward collar of
the compact boundary, the normal derivative therefore has a fixed
nonzero sign after the collar is chosen sufficiently small. Hence the
boundary level of $f$ has no interior points in that collar; all other
levels are separated from the boundary by continuity. Thus the closure
of $L$ in $M^n$ is contained in $f^{-1}(c)\cap((M^n)^\circ\setminus Z(V))$. As a connected component, $L$ is
closed in this set. Hence $L$ is closed in the compact manifold $M^n$ and is compact. It is therefore a compact hypersurface without boundary contained in $(M^n)^\circ$.

Consider the trace-free cubic tensor $\mathring A$ defined in
\eqref{eq:tracefree-A}.  Lemma \ref{lem:harmonic-contraction} shows
that
\begin{equation}\label{eq:zeta}
        \eta_M=\mathring A(V,V,\cdot)
\end{equation}
is closed and coclosed.  Since $H^1(M^n;\R)=0$, write
\begin{equation*}
        \eta_M=d\varphi.
\end{equation*}
For $U\in T\Sigma$, equations \eqref{eq:V-normal-boundary} and
Lemma \ref{lem:boundary-identities} give
\begin{equation}\label{eq:zeta-boundary}
 \begin{split}
 \eta_M(U)
 &=a^2\mathring A(\nu,\nu,U)\\
 &=a^2A(\nu,\nu,U)=0.
 \end{split}
\end{equation}
Thus $\varphi$ is constant on the connected boundary.  Since
$\delta\eta_M=0$, the function $\varphi$ is harmonic.  The Dirichlet
maximum principle yields
\begin{equation}\label{eq:zeta-zero}
        \eta_M=0.
\end{equation}

Applying Lemma \ref{lem:round-leaves} to $V$ on $(M^n)^\circ$, using
\eqref{eq:regular-connected} and the compactness just proved, shows
that the regular leaves of $V^\perp$ are round spheres. Applying Lemma \ref{lem:leaf-reduction} on $U=(M^n)^{\circ}\setminus Z(V)$, with $X=V$ and $S=\mathring A$, and using \eqref{eq:zeta-zero}, we obtain
\begin{equation}\label{eq:A-circle-zero}
        \mathring A=0
\end{equation}
on $(M^n)^\circ\setminus Z(V)$ and then on all of $M^n$ by continuity.
Equations \eqref{eq:tracefree-A} and \eqref{eq:trace-A} show that
\eqref{eq:A-circle-zero} is equivalent to
\begin{equation}\label{eq:Whitney-second-form}
 h(X,Y)=\frac{1}{n+2}
 \bigl\{g(X,Y)H+\langle JX,H\rangle JY
                    +\langle JY,H\rangle JX\bigr\}.
\end{equation}
By \cite[Theorem~2]{RosUrbano1998}, the immersion is totally geodesic
or its image is part of a Whitney sphere. Since $H$ is not
identically zero, after a unitary change of coordinates fixing the origin its image is contained in $W_{r,c}(\Sph^n)$ for some $r>0$ and $c\in \C^n$. This change of coordinates preserves the unit ball and the Legendrian capillary boundary condition. It remains to prove that $c=0$ in these coordinates.

Away from the two poles, write the Whitney immersion as
\[
W_{r,c}(t,p)=c+z(t)p,\qquad
z(t)=\frac{r\sqrt{1-t^2}(1+it)}{1+t^2},
\qquad p\in\Sph^{n-1}.
\]
The Lagrangian phase of this parametrization is
$\pi/2+(n+2)\arctan t$, up to an additive constant. Indeed,
the complex determinant of a tangent frame has the same argument
as $z'(t)z(t)^{n-1}$. The Maslov form is the differential of the
phase up to sign \cite[Section~2]{RosUrbano1998}. Thus $JH$ is
orthogonal to each latitude.
The induced metric satisfies
\[
g(\partial_t,\partial_t)
=\frac{r^2}{(1-t^2)(1+t^2)}.
\]
Together with the phase formula, this gives
\[
|JH|^2
=\frac{(n+2)^2(1-t^2)}{r^2(1+t^2)}.
\]
By smoothness, $JH$ vanishes at the two poles.
The boundary has no zeros of $JH$, because $a\neq0$ in
\eqref{eq:V-normal-boundary}. Hence $t$ is well-defined near
$\iota(\Sigma)$, and $V=a\nu$ gives $U(t)=0$ for every
$U\in T\Sigma$. Connectedness of $\Sigma$ shows that $t=t_0$
on its image. The projection of $\Sigma$ to $p\in\Sph^{n-1}$ is
a local diffeomorphism. Its image is both open and closed, and
therefore it is all of $\Sph^{n-1}$.

Put $\zeta=z(t_0)\neq0$. Because the boundary lies in the unit
sphere, for every $p\in\Sph^{n-1}$ we have
\[
1=|c+\zeta p|^2
 =|c|^2+|\zeta|^2+2\operatorname{Re}(\bar\zeta c)\cdot p.
\]
This identity implies $\operatorname{Re}(\bar\zeta c)=0$.
The Legendrian condition gives, for every $U\perp p$,
\[
0=\langle J(c+\zeta p),\zeta U\rangle
 =-\operatorname{Im}(\bar\zeta c)\cdot U.
\]
Since $p$ ranges over the whole sphere,
$\operatorname{Im}(\bar\zeta c)=0$ as well. Thus $c=0$.
Finally, $|\zeta|=1$ gives
$r^2(1-t_0^2)/(1+t_0^2)=1$, so $r\geq1$ and
$t_0^2=(r^2-1)/(r^2+1)$. This proves the theorem.

\section*{Acknowledgments}
This work was supported by the National Natural Science Foundation of China (Grant Nos. 12671062, 12201138, 12401057, 12471048, W2521103), the Natural Science Foundation of Henan Province (Grant No. 262300421869) and the Beijing Natural Science Foundation (Grant No. 1244039).

\end{document}